
 


\documentclass[final]{amsart}

\usepackage[utf8]{inputenc} 


\usepackage{graphicx} 

\usepackage[foot]{amsaddr} 
\usepackage{verbatim} 
\usepackage{amssymb}
\usepackage{amsthm}
\usepackage{amsmath}
\usepackage{amsfonts}
\usepackage{latexsym}
\usepackage{mathtools} 
\usepackage{graphics}
\usepackage{float} 
\usepackage{enumitem} 
\usepackage{cite} 
\usepackage[english]{babel} 
\usepackage{bbm} 

\usepackage{esint}

\usepackage{calrsfs}


    \usepackage[backref=none]{hyperref} 
     \usepackage[usenames,dvipsnames]{color}
     \definecolor{MyDarkBlue}{rgb}{0,0.1,0.7}
     \hypersetup{pdfborder={0 0 0},colorlinks,breaklinks=true,
       urlcolor={black},citecolor={black},linkcolor={black}
     }



\usepackage[color]{showkeys}


\usepackage{color}

\theoremstyle{plain}
\newtheorem{theorem}{Theorem}

\newtheorem{definition}[theorem]{Definition}

\newtheorem{remark}[theorem]{Remark}

\numberwithin{equation}{section}
\numberwithin{theorem}{section}



\allowdisplaybreaks


\newcommand{\eqdef }{\overset{\mbox{\tiny{def}}}{=}}

\newcommand{\rone}{\mathbb{R}}






\title[Neural network solutions to the kinetic Fokker-Planck equation]{Trend to Equilibrium for the Kinetic Fokker-Planck Equation via the Neural Network Approach}

\author[H. J. Hwang]{Hyung Ju Hwang$^\dagger$}
\address{$^\dagger$Department of Mathematics, Pohang University of Science and Technology (POSTECH), Pohang 37673, Republic of Korea. 
}

\author[J. W. Jang]{Jin Woo Jang$^\ddagger$}
\address{$^\ddagger$Center for Geometry and Physics, Institute for Basic Science (IBS), Pohang 37673, Republic of Korea. 
}

\author[H. Jo]{Hyeontae Jo$^\dagger$}

\author[J. Y. Lee]{Jae Yong Lee$^\dagger$}

\AtBeginDocument{%
   \def\MR#1{}
}

\begin{document}



\let\thefootnote\relax\footnotetext{2010 \textit{Mathematics Subject Classification.} Primary: 68T20, 35Q84, 35B40, 82C40,
97R40.\\
	\textit{Key words and phrases.} Fokker-Planck equation, Asymptotic behavior of solutions, Kinetic theory of gases, and Artificial intelligence.
	
	\textit{E-mail addresses:} hjhwang@postech.ac.kr (H. J. Hwang), jangjinw@ibs.re.kr (J. W. Jang), jht0116@postech.ac.kr (H. Jo), jaeyong@postech.ac.kr (J. Y. Lee)}
\addtocounter{footnote}{-1}\let\thefootnote\svthefootnote

\begin{abstract} The issue of the relaxation to equilibrium has been at the core of the kinetic theory of rarefied gas dynamics. In the paper, we introduce the Deep Neural Network (DNN) approximated solutions to the kinetic Fokker-Planck equation in a bounded interval and study the large-time asymptotic behavior of the solutions and other physically relevant macroscopic quantities. We impose the varied types of boundary conditions including the inflow-type and the reflection-type boundaries as well as the varied diffusion and friction coefficients and study the boundary effects on the asymptotic behaviors.  These include the predictions on the large-time behaviors of the pointwise values of the particle distribution and the macroscopic physical quantities including the total kinetic energy, the entropy, and the free energy. We also provide the theoretical supports for the pointwise convergence of the neural network solutions to the \textit{a priori} analytic solutions. We use the library \textit{PyTorch}, the activation function \textit{tanh} between layers, and the \textit{Adam} optimizer for the Deep Learning algorithm.
\end{abstract}

\setcounter{tocdepth}{2}

\maketitle
\tableofcontents

\thispagestyle{empty}

\section{Introduction}\subsection{Motivation}
One of the main questions of interest in the study of the dynamics of rarefied gas particles is on the time-asymptotic behaviors of the particle density distribution and its macroscopic quantities. Since the era of Ludwig Boltzmann, the validity of the time-irreversibility and the entropy production of the Boltzmann kinetic equation has long been a bone of contention due to the Poincar\'e recurrence theorem.
Indeed, it is an important problem to show that the time-scale of the convergence towards the equilibrium is much smaller than the time-scale of the validity of the Boltzmann equation. 
 
In this work, we study the time-irreversibility and the entropy production of the kinetic Fokker-Planck equation, which is a fundamental model for a physical plasma. This question has been heavily studied in both analytic and numerical aspects. The Lyapunov functional for the kinetic Fokker-Planck equation is given by the relative entropy functional with respect to the steady-state, which we will describe in more detail below, and we provide a newly-devised numerical method of using a machine learning algorithm for the study of the large-data asymptotic behaviors of the Deep Neural Network (DNN) solutions to the kinetic Fokker-Planck equation in a bounded domain. 

In fact, it has been considered numerically difficult to simulate an initial boundary value problem for a kinetic partial differential equation. One of the difficulties arises from the fact that a numerical solution is closely related to a computation in a bounded domain by nature, whereas the boundary of a kinetic equation still consists of the whole space in the momentum variable $v$. In order to resolve this issue, people (c.f. \cite{MR1910805}) have considered the decaying properties of the solutions to a kinetic equation; i.e., if one could observe that the solution decays fastly enough in time outside a compact set, then the size (in a specific sense) of solution outside the compact region is close to zero so one can treat the difference as a small error numerically.  
Another difficulty arises from the huge computational cost on the simulation of a kinetic partial differential equation due to the extra dimensions from the momentum variable $v$. This becomes worse when we consider an integral-based operator such as the classical Boltzmann collision operator or the Landau-Boltzmann collision operator.

The Deep Learning method is a new approach for solving partial differential equations that can resolve some of the issues that we listed above. The Deep Learning algorithm has an advantage of being intuitive and easy to be executed via the backpropagation method. The algorithm produces approximated solutions that can be differentiated continuously on domains. Also, it is relatively easier to put information in the algorithm by adding a term to the loss function via the Deep Learning approach. For example, we can simply include a term regarding the conservation of the total mass of the system in the total Loss function of the algorithm, as the scheme is proposed to conserve the total mass under several boundary conditions. In addition, the Deep Learning algorithm can be extended to arbitrary domains so it is not necessary to worry about how to split a domain into triangles as in the numerical methods. Although our DNN algorithm uses the uniform grids of domains, using sampling random points from a domain can be applied to a special domain in higher dimension equations \cite{sirignano2018dgm}.

However, there are also some weaknesses of the approach that one should be careful of.  Firstly, there is no guarantee that the Deep Learning algorithm will converge and it is theoretically difficult to show the convergence of the Deep Learning Algorithm. Also, it is hard to evaluate the accuracy of the Deep Learning algorithm in contrast with the numerical methods. So far, lots of varied measures are suggested to express the performance of a DNN model. Due to the randomly initialized parameters in a Deep Learning algorithm, each learning could give slightly different solutions, while the numerical methods are deterministic.

\subsection{A brief history of the past results} 
\subsubsection{Mathematical results on the Fokker-Planck equation}The existence and the uniqueness of the solutions to the Fokker-Planck equation have been heavily studied.
Dita \cite{Dita_1985} constructs an analytic solution of the stationary 1D Fokker-Planck equation with an absorbing boundary. Then Protopopescu \cite{Protopopescu_1987} deals with the stationary 1D Fokker-Planck equation under some velocity-dependent external forces with some boundary conditions.
DiPerna-Lions \cite{diperna1988fokker} established stability results for sequences of solutions and global existence for the Cauchy problem of the Fokker-Planck equation with large data.
Desvillettes and Villani \cite{desvillettes2001trend} showed that a polynomial decay for the solutions with suitable initial conditions to a global equilibrium with the help of logarithmic Sobolev inequalities.
The irregular coefficients in Fokker–Planck, and transport equations were also studied in \cite{lorenz2007radon, bris2008existence}.
Later, Mischler \cite{mischler2010kinetic} provided the stability results of DiPerna-Lions renormalized solutions for Fokker-Planck equations with Maxwell boundary conditions.
\cite{sheng2013well} showed that the well-posedness of the steady Fokker–Planck solutions in smooth domains with absorbing boundary conditions.

Regarding the hypoellipticity of the equation, Hwang-Jang-Velazquez in \cite{hwang2014fokker} showed the hypoellipticity properties for the Fokker-Planck equation with absorbing boundary conditions, and this has been generalized in \cite{hwang2018fokker} by Hwang-Jang-Jung.
Also, Hwang-Phan \cite{hwang2017fokker} extended the results of \cite{hwang2014fokker} to the inflow boundary conditions.

 Regarding the Vlasov-Poisson-Fokker-Planck system, Victory and O'Dwyer \cite{victory1990classical} proved the existence of local in time solutions to the VPFP system.  Neunzert, Pulvirenti, and Triolo in \cite{neunzert1984vlasov} used a probabilistic method to prove the global existence of smooth solutions in one and two dimensions. Degond and Pierre \cite{degond1986global} proposed a fully deterministic proof of the existence of global in time smooth solutions for the Vlasov-Fokker-Planck equations in one and two dimensions. They also proved that the solution of the VPFP equation converges to the solution of the VP equation as the coefficients in the FP operator term go to zero. Bouchut in \cite{bouchut1993existence,bouchut1995smoothing} showed the existence and uniqueness of strong and global in time solutions to the three-dimensional VPFP equation. The asymptotic behavior and the convergence to the equilibrium of the solutions to the Vlasov(-Poisson)-Fokker-Planck equation were studied in \cite{MR1639292,MR1414375,MR1343393,MR2765745,MR1470927}. The stationary states and large time behavior of the Wigner-Fokker-Planck equation were studied in \cite{MR2974172}.
The low and high field scaling limits of Vlasov-Poisson-Fokker-Planck system were considered in \cite{MR1848592}. The global existence and uniqueness of weak solutions to kinetic
Kolmogorov–Vicsek models were considered in \cite{MR3519972}. The Vlasov-Poisson-Fokker-Planck system with uncertainty and multiple scales was studied in \cite{MR3780745,MR3715369}.
Regarding the recent development in the qualitative properties of the VPFP system, F. Bouchut and J. Dolbeault in\cite{bouchut1995long} showed the large time behaviors and the steady-states for the solutions of the VPFP equation in the case that the particles occupy the whole space $\rone^3$. Another related result is in \cite{MR1470927} which studied the large time asymptotics for the VPFP system in a bounded domain with the reflection type boundary conditions.

\subsubsection{Numerical results on kinetic equations}In this section, we would like to introduce a few past results on the numerical analysis of kinetic equations.  
Regarding past numerical results on the Fokker-Planck equation, we would like to start with some early developments via the conservative finite element method \cite{MR892257,MR1447091,MR1283340,MR1739113,MR816660}. Regarding the (spatially homogeneous and inhomogeneous) nonlinear Landau collision equation, which is a generalized version of the linear Fokker-Planck equation, we have the early developments via the conservative finite element method \cite{MR610857,MR1677589,MR1640174,MR1688993,MR1606249,MR2186367}. We also record a result via the spectral method \cite{MR1906573}.

We note that many methods have been proposed to solve the Vlasov-Poisson-Fokker-Planck equation and the Fokker-Planck-Landau equation. Allen-Victory \cite{allen1994computational} and Havlak-Victory \cite{havlak1996numerical} proposed the random particle method with the analysis and the computational study of its method. The finite difference scheme was also used to solve the VPFP with the periodic 1D case in \cite{cheng1976integration,schaeffer1998convergence}. Another approach is the deterministic particle methods which are based on the characteristic trajectories for the transport term of the Vlasov-Poisson-Fokker-Planck equation \cite{havlak1998deterministic,schaeffer1997difference,schaeffer1998convergence}. Wollman and Ozizmir \cite{wollman2005numerical,wollman2008deterministic} combined the deterministic method with the periodic regriding of the distribution function to approximate the solutions more stable and accurate. This method is extended to the two-dimensional case in \cite{wollman2009numerical}. The fast spectral method, which is also an alternative method for the numerical approximation was introduced in \cite{pareschi2000fast}. In \cite{filbet2002numerical}, Filbet and Pareschi presented a new spectral method that could be extended to the nonhomogeneous situation.

\subsubsection{Neural networks and the Cauchy problem of a PDE } 

The neural network architecture was first introduced in \cite{mcculloch1943logical}. Then, Cybenko \cite{cybenko1989approximation} established sufficient conditions for which a continuous function can be approximated by finite linear combinations of single hidden layer neural networks with the same univariate function. Hornik-Stinchcombe-White \cite{hornik1989multilayer} also showed measurable functions can be approximated by the multi-layer feedforward networks with a monotone sigmoid function. Then Cotter \cite{cotter1990stone} extended the result of \cite{hornik1989multilayer} to a new architecture, and later Li \cite{li1996simultaneous} proved that the multi-layer network with one hidden layer can approximate a target function and its higher partial derivatives on a compact set.

Though the theory of artificial neural networks as an approximation to solutions of differential equations has such a long history, the actual implementation of the ideas has a relatively short history due to the technical and algorithmical issues. Solving differential equations using an artificial neural network with architecture including one single layer and ten units were studied in \cite{lagaris1998artificial}, and the results were extended to a domain with complex boundaries in \cite{lagaris2000neural}. Then Jianyu et al. \cite{jianyu2003numerical} replaced an activation function with a radial basis and solved the Poisson equation.

More recently, Berg-Nystr\"om \cite{berg2018unified} used a DNN to solve steady problems in 1D and 2D space dimensions with complex geometry. Han-Jentzen-Weinan \cite{han2018solving} then applied DNNs to the stochastic process for solving high dimensional differential equations. Very recently, Raissi-Perdikaris-Karniadakis \cite{raissi2019physics} suggested an algorithm that solves both forward and inverse problems. Sequentially, Jo et al. \cite{jo2019deep} gave a theoretical reason that neural networks converge to analytic solutions in forward and inverse problems. Also, the application of other architectures or learning strategies of a convolutional neural network or reinforcement learning has been studied in \cite{siahkoohi2019neural, wei2019general}.
Also, the neural network approaches to solve a partial differential equation have been proposed in \cite{berg2018unified,raissi2017physics,sirignano2018dgm}.

\subsection{The Fokker-Planck equation}The $d-$dimensional kinetic Fokker-Planck equation reads as
\begin{equation}
	\partial_t f +v\cdot \nabla_x f= \nabla_v\cdot (\sigma \nabla_v f +\beta vf), \ (t,x,v)\in [0,T]\times \Omega\times \mathbb{R}^d,
\end{equation}where $\Omega\subset \mathbb{R}^d$, $\sigma>0$ is the diffusion coefficient, $\beta\ge 0$ is the friction coefficient, and $f=f(t,x,v)$ is the probablistic density distribution of particles. In this paper, we consider the following 1-dimensional kinetic Fokker-Planck equation in a bounded interval $\Omega=(-1,1)$ and $d=1$ as
\begin{equation}\label{FPeq}
	\partial_t f +v \partial_x f= \partial_v(\sigma \partial_v f +\beta vf), \ (t,x,v)\in (0,T)\times \Omega \times \mathbb{R},
\end{equation} subject to the initial condition
\begin{equation}\label{initial}
f(0,x,v)=f_0(x,v)\geq 0, \ (x,v)\in \Omega\times  \mathbb{R}.
\end{equation}
The boundary conditions that we impose will be introduced in Section \ref{sec:boundary}.

 \subsection{Boundary conditions}\label{sec:boundary}We define the outward normal vector $n_x$ on the boundary $\partial \Omega$ as 
 \begin{equation}\label{out}
 n_x\eqdef  \frac{\nabla \zeta(x)}{|\nabla\zeta(x)|}.
 \end{equation}
 Since our $\Omega=(-1,1)$ and hence $\partial\Omega = \{-1,1\}$, our $n_x=\hat{x}$ at $x=1$ and $=-\hat{x}$ at $x=-1$.
 Throughout this paper, we will denote the phase boundary of $\partial\Omega\times \rone$ as $\gamma\eqdef  \partial\Omega\times \rone.$ Additionally we split this boundary into an outgoing boundary $\gamma_+$, an incoming boundary $\gamma_-$, and a singular boundary $\gamma_0$ for grazing velocities, defined as
 \begin{equation}
 \begin{split}
 \gamma_+&\eqdef  \{(x,v)\in\Omega\times \rone :n_x\cdot v >0\},\\
 \gamma_-&\eqdef  \{(x,v)\in\Omega\times \rone :n_x\cdot v <0\},\\
 \gamma_0&\eqdef  \{(x,v)\in\Omega\times \rone :n_x\cdot v =0\}.
 \end{split}
 \end{equation}
 Since our $\Omega =(-1,1)$ and $\partial\Omega = \{-1,1\}$, we have 
  \begin{equation}
 \begin{split}
 \gamma_+&=\{(1,v)|\ v>0\}\cup\{(-1,v)|\ v<0\} ,\\
 \gamma_-&=\{(1,v)|\ v<0\}\cup\{(-1,v)|\ v>0\},\\
 \gamma_0&=\{(1,0), (-1,0)\} .
 \end{split}
 \end{equation}

 In terms of the probability density function $f$, we formulate the following four physical boundary conditions throughout the paper. 
 \subsubsection{Specular reflection boundary condition}  \begin{equation}\label{specular}
 f(t,x,v)|_{\gamma_-}=f(t,x,-v),
 \end{equation} for $x=-1$ and $1$.
 
 \subsubsection{Diffusive reflection boundary condition}  \begin{equation}\label{diffusive}
 f(t,x,v)|_{\gamma_-}=C\mu(v)\int_{w\cdot n_x>0}f(t,x,w)|w\cdot n_x|dw,
 \end{equation} for $x=-1$ and $1$, where $$C=\left(\int_{v\cdot n<0}\mu(v)|v\cdot n|dv\right)^{-1}\  \text{and} \ \mu(v)\eqdef e^{-\frac{v^2}{2}},$$
for both $n=\hat{x}$ and $=-\hat{x}$. 
 \subsubsection{Periodic boundary condition}  \begin{equation}\label{periodic}
 f(t,x,v)|_{\gamma_-}=f(t,-x,v),
 \end{equation} for $x=-1$ and $1$.
 
 \subsubsection{Absorbing boundary condition}  \begin{equation}\label{absorbing}
 f(t,x,v)|_{\gamma_-}=0,
 \end{equation} for $x=-1$ and $1$.
 
 \subsubsection{Inflow boundary condition}  \begin{equation}\label{inflow}
 f(t,x,v)|_{\gamma_-}=g(t,x,v),\ \text{for} \ x=-1\text{ and }1,
 \end{equation} with a given function $g$.
 
\subsection{The equilibrium state, the Lyapunov functional, and the balance laws}\label{equilibrium}It is well-known that the linear Fokker-Planck equation \eqref{FPeq} has a global equilibrium solution. This is called the global Maxwellian solution and the form of the steady-state was introduced in \cite[Theorem 1.2, p. 1349]{MR1470927} as follows:
\begin{equation}\label{maxwellian}
f_\infty (v)=\frac{M}{C(2\pi\frac{\sigma}{\beta})^{0.5}}\exp\left(-\frac{\beta}{\sigma}\frac{|v|^2}{2}\right)
\end{equation}
where $M=\|f_0(\cdot,\cdot)\|_{L^1_{x,v}}$ and $C=|\Omega|=2$.
The Lyapunov functional $\eta(t)$ is defined by the relative entropy of the solution $f$ with respect to the stationary distribution $f_\infty$ as 
\begin{equation}
\begin{split}
\eta(t)&\eqdef \int_{\Omega\times \rone} f\log\left(\frac{f}{f_\infty}\right)dxdv\\
&=\int_{\Omega\times \rone} f\log f dxdv +\frac{\beta}{2\sigma} \int_{\Omega\times \rone} |v|^2f dxdv +\log\left(\frac{C(2\pi\frac{\sigma}{\beta})^{0.5}}{M}\right) \int_{\Omega\times \rone} f dxdv\\
&=-\text{Ent}(t) +\frac{\beta}{\sigma}\text{KE} (t)+\log\left(\frac{C(2\pi\frac{\sigma}{\beta})^{0.5}}{M}\right)\text{Mass}(t) ,\end{split}\end{equation} where the entropy of the system ``\text{Ent}", the total kinetic energy ``\text{KE}", and the total mass ``Mass" of the system are defined as
$$\text{Ent}(t)\eqdef- \int_{\Omega\times \rone} f\log f dxdv,$$
$$\text{KE}(t) \eqdef \frac{1}{2} \int_{\Omega\times \rone} |v|^2f dxdv,$$ and $$
\text{Mass}(t)\eqdef \int_{\Omega\times \rone} f dxdv.$$ We define the free energy functional ``FE" as $$\text{FE}(t) \eqdef \text{KE}(t)-\frac{\sigma}{\beta} \text{Ent}(t),$$ whose time-derivative is equivalent to that of the Lyapunov functional $\eta$ if the total mass is conserved. In the cases of the specular reflection, the diffusive reflection, and the periodic boundaries, we expect that the Lyapunov functional satisfies $\eta'(t)\le 0$, which is a manifestation of the second law of thermodynamics.

The following balance laws on the macroscopic quantities \cite[p. 1350]{MR1470927} are also well-known:
\begin{itemize}
  \item Balance of the total mass:
  \begin{equation}
      \frac{d}{dt}\|f(t,\cdot,\cdot )\|_{L^1_{x,v}} = -\int_{(-1,1)\times \rone} dxdv (v \cdot n_x)\gamma f(t,x,v).
  \end{equation}
  \item Balance of the total kinetic energy:
  \begin{multline}\label{Balance_KE}
      \frac{d}{dt} \left( \frac{1}{2} \int_{(-1,1)\times \rone} dxdv |v|^2 f(t,x,v) \right) \\ = -\frac{1}{2} \int_{\{-1,1\}\times \rone} dxdv (v \cdot n_x) |v|^2 f(t,x,v) \\- \beta\int_{(-1,1)\times \rone} dxdv |v|^2 f(t,x,v) + \sigma \|f(0,\cdot,\cdot )\|_{L^1_{x,v}}.
  \end{multline}
  \item Balance of the entropy:
  \begin{multline}
      \frac{d}{dt} \int_{(-1,1)\times \rone} dxdv (-f(t,x,v) \log f)(t,x,v) \\ =- \beta\|f(0,\cdot,\cdot )\|_{L^1_{x,v}} + \int_{\{-1,1\}\times \rone} dxdv (v \cdot n_x) f(t,x,v) \log f(t,x,v) \\+ 4\sigma\int_{(-1,1)\times \rone} dxdv \left| \nabla_v \sqrt{f} \right|^2.
  \end{multline}
\end{itemize}
\subsection{Main results, difficulties, and our strategy}The main results of the paper consist of two parts. One is on the theoretical evidence on the relationship between the DNN solutions and the \textit{a priori} analytic solutions. More precisely, we first prove in Theorem \ref{forward_loss} that a sequence of neural network approximated solutions, which make the total loss term converge to zero, exists if a $\widehat{C}^{(1,1,2)}$ solution to the equation exists. Namely, the theorem implies we can always find appropriate weights of the neural nework which reduce the total error functions as much as we want. However, since this does not guarantee the convergence of the approximated solutions as the total loss term converges to zero, we prove an additional Theorem \ref{theorem_forward} which provides us that the neural network solutions indeed converge to the analytic solution as the total loss term vanishes. This is proved under the suitable smallness on the boundary condition \eqref{artificial_bdry} which corresponds to the truncation of the $v$ domain in the choice of our grid points, as introduced in Section \ref{assumption_for_nn}. Though the equation is linear, the derivation of an energy inequality for the convergence includes the boundary integrations not just in $x$ variable but also in $v$ variable due to the truncation of $v$ space and hence we needed the smallness condition on the difference of the approximated solutions and the analytic solution as in the condition \eqref{artificial_bdry} to deal with the boundary integrations in $v$ variable. Also, in order to create a sufficiently large dissipation term on the left-hand side of the energy inequality, we follow the transformation of the Fokker-Planck equation motivated by Carrillo \cite{carrillo1998global}. 

On the other hand, we provide in Section \ref{sec:results} the time-asymptotic behaviors of the neural network approximated solutions and the macroscopic physical quantities to the kinetic Fokker-Planck equation under the various types of initial and boundary conditions with different values of diffusion and friction coefficients. The learning algorithms of our DNN algorithm are based on the development of the total loss function that contains the information of the Fokker-Planck equation and the initial-boundary conditions and the appropriate selection of the grid points and the weights for each loss term so they fit to the physical conditions.
Our algorithm uses the library \textit{PyTorch} and the hyper-tangent \textit{tanh} activation function between the layers. For the purpose of minimizing the total loss term, we use the \textit{Adam} optimizer which is based on the stochastic gradient descent. One of the reasons that many of the existing numerical methods fail to simulate the solutions to several kinetic equations is on the issue of the conservation of the total mass. In order to deal with the difficulty, we remark that we increased the learning efficiency of the model by adding the mass conservation law to the total loss function. It is indeed a huge advantage of using the neural network approach that we can intuitively add any physically relevant conditions on the necessary quantities when we design the total loss function, though the reduction of the total loss function is another issue of concern that we need to take care of.

 In addition, our numerical simulations indeed provide several predictions on the time-asymptotic behaviors of the \textit{a priori} analytic solutions to the kinetic Fokker-Planck equation and the physically relevant macroscopic quantities in the pointwise sense. Namely, we provide the time-asymptotic plots on the actual pointwise value at each grid under varied types of conditions, and these definitely contain much more information than the graphs on the asymptotic behaviors of the general weighted $L^p$-moments of the solution for some $p$. To the best of authors' knowledge, there have been no numerical plots of the pointwise values of the approximated solutions for a kinetic equation under the varied types of the physical boundary conditions, such as the inflow-type and the reflection-type boundary conditions. 
Via the numerical simulations, we have observed the asymptotic behaviors of the solutions to the equation that were theoretically proved under some specific situations; our numerical simulations predict the pointwise convergence of the neural network solutions to the global Maxwellian for varied types of the boundary conditions and the varied coefficients for the diffusion and the friction. Under the specular reflection boundary condition, we also provide the different rates of convergence under the varied choices of the friction coefficient $\beta$.

\subsection{Outline of the paper}In Section \ref{sec:neural}, we will introduce in detail our DNN architecture and method to solve the Cauchy problem to the Fokker-Planck equation. This will include the detailed descriptions on the hidden layers and the definitions of grid points and loss functions. In Section \ref{sec:convergence}, we will first prove that there exists a sequence of weights such that the total sum of loss functions converges to 0 and that neural networks equipped with such weights converge to the analytic solutions. In Section \ref{sec:results}, we provide our numerical simulations and the result for each initial and boundary condition. Several plots will show the actual values of the distribution function at each time and spatial variable and will provide the asymptotic behaviors of macroscopic quantities as well as the actual values of the distribution. Finally, in Section \ref{sec:conclusion}, we complete the paper by summarizing our methods and the results.
\section{Methodology: Neural Network approach}\label{sec:neural}
In this section, we introduce our DNN structure and method to solve the Cauchy problem to the 1-dimensional kinetic Fokker-Planck equation \eqref{FPeq}.

\subsection{Our Deep Learning algorithm and the architecture}
A Deep Learning algorithm is a non-linear function approximation method via a DNN structure. A DNN consists of several layers, and each layer has several neurons which are connected to pre- and post- layer neurons. Connected neurons have a relation with an affine transformation and a non-linear activation function. We denote the approximated function as $f^{nn}(t,x,v;m,w,b)$ and we suppose our DNN has $L$ layers; in other words, our DNN has an input layer, $L-1$ hidden layers, and an output layer. The input layer takes $(t,x,v)$ as input and the final layer gives $f^{nn}(t,x,v;m,w,b)$ as the output. We denote the relation between the $l$-th layer and the $(l+1)$-th layer ($l=1,2,...,L-1$) as
\begin{equation*}
	z_j^{(l+1)}=\sum_{i=1}^{m_l}w_{ji}^{(l+1)}\bar{\sigma}_{l}(z_i^l) + b_j^{(l+1)},
\end{equation*}
where $m=(m_0,m_1,m_2,...,m_{L-1})$, $w=\{w^{(k)}_{ji}\}_{i,j,k=1}^{m_{k-1},m_k,L},$ $b=\{b^{(k)}_j\}_{j=1,k=1}^{m_k,L},$ and
\begin{itemize}
	\item $z_i^l$ : the $i$-th neuron in the $l$-th layer
	\item $\bar{\sigma}_l$ : the activation function in the $l$-th layer
	\item $w_{ji}^{(l+1)}$ : the weight between the $i$-th neuron in the $l$-th layer and the $j$-th neuron in the $(l+1)$-th layer
	\item $b_j^{(l+1)}$ : the bias of the $j$-th neuron in the $(l+1)$-th layer
	\item $m_l$ : the number of neurons in the $l$-th layer.
\end{itemize}
Note that the relation between the input layer and the first-hidden layer is expressed as follows:
\begin{equation*}
	z_j^{1}=\sum_{i=1}^3 w_{ji}^1 z_i^0 + b_j^{1},
\end{equation*}
where $(z_1^0, z_2^0, z_3^0)=(t,x,v)$.

We use the library \textit{PyTorch} and the hyper-tangent \textit{tanh} activation function for our fully connected DNN. Our DNN has four layers which each layer has 3-128-256-128-2 neurons as shown in the following figure:
\begin{figure}[H]
	\includegraphics[width=0.5\textwidth, draft=false]{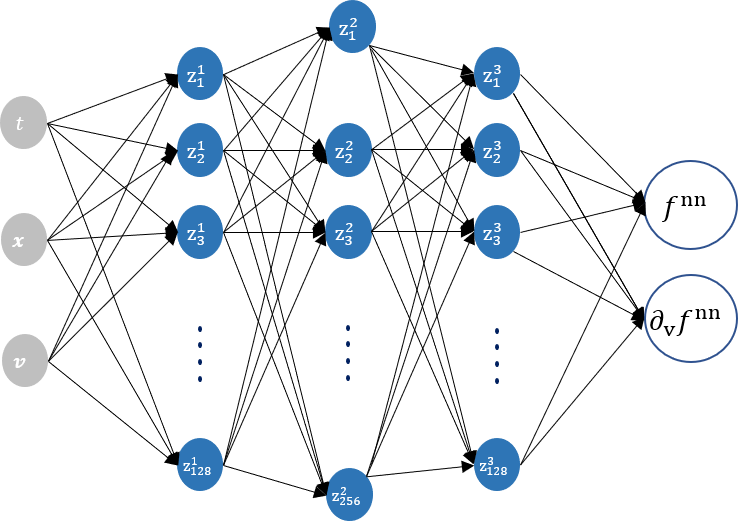}
	\caption{The DNN structure}
	\label{fig:DNN_structure}
\end{figure}
 The reason that we make the second output is to approximate $\partial_{vv} f^{nn}(t,x,v;m,w,b)$ via the reduction of order technique. We approximate $\partial_{vv} f^{nn}(t,x,v;m,w,b)$ by $$\partial_v h^{nn}(t,x,v;m,w,b)$$ using the second output $$\partial_v f^{nn}(t,x,v;m,w,b)=h^{nn}(t,x,v;m,w,b)$$ contained in the loss term; see Section \ref{Loss functions}. It reduces the computational costs dramatically. Regarding the optimization algorithm, we use \textit{Adam} optimization algorithm, which is an extended algorithm of the stochastic gradient descent and is heavily used in the applications of the deep learning. 

\subsection{Grid points}\label{assumption_for_nn}
To approximate the kinetic solution $f(t,x,v)$ via the Deep Learning algorithm, we make the data of grid points for each variable domain.  We choose $T$ as $5$ or $10$ for each boundary condition and each collision coefficient. We truncate the momentum space for the $v$ variable as $[-20,20]$, make the grid points for $v$ in $[-10,10]$ for the training, and assume that $f^{nn}(t,x,v;m,w,b)$ is $0$ if $|v|>10$. More precisely, the grid points for the training are chosen uniformly as follows:
$$
	\left\{ (t_i,x_j,v_k) \right\}_{i,j,k} \in [0,T] \times [-1,1] \times [-10,10]\  \text{with} \ \Delta t=0.01 ,\ \Delta x=0.02,\ \Delta v=0.2.
$$
We use the grids $$\left\{ (t=0,x_j,v_k) \right\}_{j,k}$$ for the initial condition and $$\left\{ (t_i,x=1\text{\ or\ }-1,v_k) \right\}_{i,k}$$ for the boundary condition.

\subsection{Loss functions}\label{Loss functions}
In the algorithm, the Adam optimizer finds the optimal parameters $w_{ji}^{(l+1)}$ and $b_j^{(l+1)}$ to minimize loss functions using the back-propagation method. Thus, we need to define loss functions for our 1-dimensional kinetic Fokker-Planck equation: $Loss_{GE}$ for the Fokker-Planck equation \eqref{FPeq}, $Loss_{IC}$ for the initial condition \eqref{initial} and $Loss_{BC}$ for the boundary conditions defined as in Section \ref{sec:boundary}.

Firstly, we define a loss function for the governing equation \eqref{FPeq}. We use the \textit{reduction-of-order} technique for the second-order term as follows:
\begin{align*}
  Loss_{GE}^1
  &= \int_{(0,T)}dt\int_{(-1,1)}dx\int_{V}dv |\partial_t f^{nn}(t,x,v;m,w,b) +v \partial_x f^{nn}(t,x,v;m,w,b)\\&\quad\quad\quad\quad - (\sigma\partial_v h^{nn}(t,x,v;m,w,b) + \beta \partial_v (vf^{nn})(t,x,v;m,w,b))|^2,\\
  Loss_{GE}^2
  &= \int_{(0,T)}dt\int_{(-1,1)}dx\int_{V}dv |h^{nn}(t,x,v;m,w,b)-\partial_v f^{nn}(t,x,v;m,w,b)|^2,
\end{align*}where $V\eqdef [-10,10].$
Then we define $Loss_{GE}$ as
\begin{multline}\label{loss_ge}
  Loss_{GE} = Loss_{GE}^1 + Loss_{GE}^2\\ \approx \frac{1}{N_{i,j,k}}\sum_{i,j,k} |\partial_t f^{nn}(t_i,x_j,v_k;m,w,b) +v \partial_x f^{nn}(t_i,x_j,v_k;m,w,b) \\ - (\sigma\partial_v h^{nn}(t_i,x_j,v_k;m,w,b) + \beta \partial_v (vf^{nn})(t_i,x_j,v_k;m,w,b))|^2 \\+ |h^{nn}(t_i,x_j,v_k;m,w,b)-\partial_v f^{nn}(t_i,x_j,v_k;m,w,b)|^2,
\end{multline}
where $N_{i,j,k}$ is the number of grid points.

We now define the loss function for the initial condition via the use of the initial grid points as
\begin{multline}\label{loss_ic}
  Loss_{IC} = \int_{(-1,1)}dx\int_{V} dv\left|f^{nn}(0,x,v)-f_0(x,v)\right|^2\\ \approx \frac{1}{N_{j,k}}\sum_{j,k} \left| f^{nn}(0,x_j,v_k)-f_0(x_j,v_k)\right|^2.
\end{multline}

The loss function for the \textit{inflow} boundary condition in Section \ref{sec:boundary} is defined as follows:
\begin{multline}\label{loss_bc}
  Loss_{BC} = \sum_{x\in\{-1,1\}}\int_{V} dv\left|f^{nn}(t,x,v;m,w,b)-g(t,x,v;m,w,b)\right|^2\\ \approx \frac{1}{2N_{i,k}}\sum_{x\in\{-1,1\},i,k} \left|f^{nn}(t_i,x,v_k;m,w,b)-g(t_i,x,v_k;m,w,b)\right|^2.
\end{multline} For the other types of the boundary conditions from Section \ref{sec:boundary}, we define the loss term similarly via altering $g(t,x,v;m,w,b)$ and $g(t_i,x,v_k;m,w,b)$. 

One of the well-known a priori conservation law for the Fokker-Planck equation (\ref{FPeq}) for the specular, the periodic and the diffusive boundary conditions is the conservation of mass, in which we have that the $L^1_{x,v}$ moment of $\|f^{nn}(t,\cdot,\cdot ;m,w,b)\|_{L^1_{x,v}}$ propagates in time.  Therefore, the reduction of the loss term \eqref{loss_ge} would result in the reduction of the loss term that is defined for the conservation of mass. Therefore, without loss of generality, we add one more loss term with repect to the conservation of mass for more accurate analysis when impose the three boundary conditions for each $t_i$:
\begin{equation*}
  Loss_{\text{Mass}}^i = \left| \frac{d}{dt}\int_{(-1,1)}\int_{V} f^{nn}(t,x,v;m,w,b) \right|^2_{t=t_i}.
\end{equation*}
Then we define the whole $Loss_{\text{Mass}}$ as
\begin{equation}\label{loss_mass}
  Loss_{\text{Mass}} = \sum_i Loss_{\text{Mass}}^i \approx \frac{1}{N_i} \sum_{i}  \left| \frac{1}{N_{j,k}}\frac{d}{dt}\sum_{j,k}  f^{nn}(t,x_j,v_k;m,w,b) \right|^2_{t=t_i}.
\end{equation}

Finally, we define the total loss as
\begin{equation}\label{loss_total}
  Loss_{Total} = Loss_{GE} + Loss_{IC} + Loss_{BC}
\end{equation}
for the absorbing boundary (\ref{absorbing}) and the inflow bounbary (\ref{inflow}), and
\begin{equation}\notag
  Loss_{Total} = Loss_{GE} + Loss_{IC} + Loss_{BC} + Loss_{\text{\text{Mass}}}
\end{equation}
for the rest of the boundary conditions of Section \ref{sec:boundary}.
\section{Theoretical discussions}
\subsection{On the convergence of DNN solutions to analytic solutions}\label{sec:convergence}
In this section, we prove that there exists a sequence of weights such that the total sum of loss functions, defined later as \eqref{loss_total}, converges to 0. Sequentially, we also prove that neural networks equipped with such weights converge to an analytic solution. 
Throughout the section, we assume that the existence and the uniqueness of solutions for \eqref{FPeq} and \eqref{initial} with either \eqref{inflow} or \eqref{specular} are a priori given. We first introduce the following definition and the theorem from \cite{li1996simultaneous} on the existence of the approximated neural network solution:
\begin{definition}[Li, \cite{li1996simultaneous}]\label{C_hat} For a compact set $K$ of $\mathbb{R}^n$, we say $f\in \widehat{C}^m(K)$, $m\in \mathbb{Z}_+^n$ if there is an open $\Omega$ (depending on $f$) such that $K\subset \Omega$ and $f\in C^m(\Omega).$
\end{definition}

\begin{theorem}[Li, Theorem 2.1, \cite{li1996simultaneous}]\label{global} Let $K$ be a compact subset of $\mathbb{R}^n$, $n\ge 1$, and $f\in\widehat{C}^{m_1}(K)\cap\widehat{C}^{m_2}(K)\cap \cdots \widehat{C}^{m_q}(K)$, where $m_i \in \mathbb{Z}^n_+$ for $1\le i\le q$. Also, let $\bar{\sigma}$ be any non-polynomial function in $C^l(\mathbb{R})$, where $l=\max\{|m_i|:1\le i\le q\}$. Then for any $\varepsilon>0,$ there is a network
$$f^{nn}(x)=\sum_{i=0}^\nu c_i\bar{\sigma}(\langle w_i,x\rangle +\theta_i), \ x\in \mathbb{R}^n,$$ where $c_i\in \mathbb{R},$ $w_i\in \mathbb{R}^n$, and $\theta_i\in \mathbb{R}$, $0\le i\le \nu$ such that 
$$\|D^kf-D^kf^{nn}\|_{L^{\infty}(K)}<\varepsilon,$$
for $k\in \mathbb{Z}^n_+$, $k\le m_i$, for some $i$, $1\le i\le q.$
\end{theorem}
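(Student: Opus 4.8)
The plan is to show that the linear span
\[
\mathcal{N}:=\operatorname{span}\bigl\{\,x\mapsto\bar\sigma(\langle w,x\rangle+\theta)\ :\ w\in\mathbb{R}^n,\ \theta\in\mathbb{R}\,\bigr\}
\]
is dense in $\widehat{C}^{\,l}(K)$ equipped with the norm $\|g\|:=\max_{|k|\le l}\sup_K|D^kg|$, where $l=\max\{|m_i|:1\le i\le q\}$. This reduction is enough: if $f\in\widehat{C}^{m_1}(K)\cap\cdots\cap\widehat{C}^{m_q}(K)$, then every multi-index $k$ occurring in the conclusion satisfies $k\le m_i$ for some $i$, hence $|k|\le l$, so a single $f^{nn}\in\mathcal{N}$ with $\|f-f^{nn}\|<\varepsilon$ gives $\|D^kf-D^kf^{nn}\|_{L^\infty(K)}<\varepsilon$ simultaneously for all such $k$.

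First I would reduce to a $C^\infty$ activation. Mollifying, set $\bar\sigma_\phi:=\bar\sigma*\phi$ with $\phi\in C_c^\infty(\mathbb{R})$; since $\bar\sigma$ is non-polynomial, $\phi$ can be chosen so that $\bar\sigma_\phi$ is non-polynomial as well, and $\bar\sigma_\phi\in C^\infty(\mathbb{R})$. For fixed $w,\theta$, Riemann sums of $y\mapsto\bar\sigma(\langle w,x\rangle+\theta-y)\phi(y)$ approximate $\bar\sigma_\phi(\langle w,x\rangle+\theta)$, and because $\bar\sigma\in C^l$ these sums converge in the $\widehat{C}^{\,l}(K)$-norm: differentiating in $x$ only produces the bounded factors $w^k$ together with the functions $\bar\sigma^{(|k|)}$, $|k|\le l$, which are uniformly continuous on the relevant compact set. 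Hence the $\widehat{C}^{\,l}(K)$-closure of $\mathcal{N}$ contains the analogous span built from $\bar\sigma_\phi$, and it suffices to work with the smooth non-polynomial activation $\bar\sigma_\phi$.

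Next I would recover all monomials. For the smooth non-polynomial $\bar\sigma_\phi$ and any $j\in\mathbb{Z}_+$ there is $\theta_j$ with $\bar\sigma_\phi^{(j)}(\theta_j)\ne0$, since a $C^\infty$ function some of whose derivatives vanishes identically is a polynomial. Writing the $j$-th divided difference of $t\mapsto\bar\sigma_\phi(t\langle w,x\rangle+\theta_j)$ at the nodes $0,h,2h,\dots,jh$ as a finite linear combination of network terms $x\mapsto\bar\sigma_\phi(ih\langle w,x\rangle+\theta_j)$ and letting $h\to0$, one obtains, in the $\widehat{C}^{\,l}(K)$-norm, the ridge monomial $\tfrac{1}{j!}\bar\sigma_\phi^{(j)}(\theta_j)\langle w,x\rangle^{j}$; here the $C^\infty$ smoothness of $\bar\sigma_\phi$ is what makes the limit legitimate together with all its $x$-derivatives up to order $l$. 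Varying $w$ and invoking the classical representation of polynomials as linear combinations of powers of linear forms, the $\widehat{C}^{\,l}(K)$-closure of $\mathcal{N}$ then contains every polynomial on $\mathbb{R}^n$.

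Finally, polynomials are dense in $\widehat{C}^{\,l}(K)$: mollifying $f$ on a compact neighborhood contained in its domain of smoothness and truncating the resulting Taylor expansion on a box containing $K$ approximates $f$ together with all derivatives of order $\le l$, uniformly on $K$. Chaining the three approximations — $f$ by a polynomial, the polynomial by elements of the $\bar\sigma_\phi$-span, and those by elements of $\mathcal{N}$ — produces the desired $f^{nn}$. I expect the main obstacle to be the uniform bookkeeping in the middle steps: forcing the Riemann-sum limits and then the divided-difference limits to converge in the full $\widehat{C}^{\,l}$ topology rather than merely uniformly, which is exactly where the hypotheses $\bar\sigma\in C^l$ and $l=\max|m_i|$ are used. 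Since this statement is Theorem 2.1 of \cite{li1996simultaneous}, the complete argument is available there and is only quoted here.
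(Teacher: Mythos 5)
The paper does not prove this statement: it is imported verbatim as Theorem 2.1 of \cite{li1996simultaneous}, so there is no in-paper argument to compare against. Judged on its own terms, your sketch follows the standard route (mollify the activation to a smooth non-polynomial one, extract the ridge monomials $\langle w,x\rangle^{j}$ by divided differences in the scalar parameter, span all polynomials by powers of linear forms, and finish with polynomial density), which is essentially the argument in Li's paper and in Pinkus's survey. The divided-difference step, the non-vanishing of some $\bar\sigma_\phi^{(j)}(\theta_j)$, and the Riemann-sum approximation of the mollified activation in the $C^{l}$ topology are all correctly identified and would go through.

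There is, however, one genuine gap: your opening reduction to density in $\widehat{C}^{\,l}(K)$ with the isotropic norm $\max_{|k|\le l}\sup_K|D^kg|$ silently strengthens the hypothesis. Membership in $\widehat{C}^{m_1}(K)\cap\cdots\cap\widehat{C}^{m_q}(K)$ only guarantees continuity of $D^kf$ for multi-indices $k$ dominated componentwise by some $m_i$; it does \emph{not} give $f\in\widehat{C}^{\,l}(K)$ in the total-order sense. For instance, with $n=2$, $q=1$, $m_1=(0,2)$ one has $l=2$, but $\partial_{x_1}\partial_{x_2}f$ and $\partial_{x_1}^2f$ need not exist, so the quantity $\|f-f^{nn}\|$ you propose to make small is not even defined. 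The anisotropic formulation is precisely the point of stating the theorem with several multi-indices $m_1,\dots,m_q$. The fix is to carry the seminorm $\max_{1\le i\le q}\max_{k\le m_i}\sup_K|D^k(\cdot)|$ through all three stages; the mollification, divided-difference, and polynomial-density steps all survive in that norm, but the last of these needs a construction that respects the anisotropy (e.g.\ tensor-product Bernstein polynomials) rather than ``truncating the Taylor expansion,'' which in any case is not a valid mechanism for a general $C^\infty$ function whose Taylor series need not converge.
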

\begin{remark} We can generalize the result above to the one with several hidden layers (see, \cite{hornik1989multilayer}). Also, we may assume that the architecture is assumed to have only one hidden layer; i.e., $L=2$. 
\end{remark}
Now we introduce our first main theorem which states that a sequence of neural network solutions that makes the total loss term converge to zero exists if a $\widehat{C}^{(1,1,2)}$ solution to the equation exists:
\begin{theorem}\label{forward_loss}Assume that the solution $f$ to \eqref{FPeq} and \eqref{initial} with either \eqref{inflow} or \eqref{specular} belongs to $\widehat{C}^{(1,1,2)}([0,T]\times[-1,1]\times V)$, and the activation function $\bar{\sigma}(x)\in C^{(2,2,3)}([0,T]\times [-1,1]\times V)$ is non-polynomial. Then, there exists $\{m_{[j]}, w_{[j]}, b_{[j]}\}_{j=1}^\infty$ such that a sequence of the DNN solutions with $m_{[j]}$ nodes, denoted by $\{f_j(t,x,v) = f^{nn}(t,x,v;m_{[j]}, w_{[j]}, b_{[j]})\}_{j=1}^{\infty}$ satisfies
\begin{equation}\label{forward_loss_0}
Loss_{Total}(f_j)\rightarrow 0\text{ as }j\rightarrow\infty.
\end{equation}

\end{theorem}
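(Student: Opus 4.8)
The plan is to apply Li's simultaneous approximation theorem (Theorem \ref{global}) to the \emph{a priori} solution $f$ and then show that each loss term, being an $L^2$ norm of a continuous expression in $f$ and its derivatives, is controlled by the $L^\infty$ approximation error on the compact training domain. First I would set $K = [0,T]\times[-1,1]\times V$, which is compact in $\mathbb{R}^3$. Since $f\in\widehat{C}^{(1,1,2)}(K)$ and $\bar\sigma\in C^{(2,2,3)}(K)$ is non-polynomial, Theorem \ref{global} (applied with the multi-index list containing $(1,1,2)$, so that $l = \max|m_i| = 3$ and $\bar\sigma\in C^3(\mathbb{R})$ suffices) yields, for each $\varepsilon = 1/j$, a network $f_j = f^{nn}(\cdot;m_{[j]},w_{[j]},b_{[j]})$ with
\begin{equation}\notag
\|D^k f - D^k f_j\|_{L^\infty(K)} < 1/j \quad\text{for all } k\le (1,1,2).
\end{equation}
In particular $f_j$, $\partial_t f_j$, $\partial_x f_j$, $\partial_v f_j$, and $\partial_{vv} f_j$ all converge uniformly on $K$ to the corresponding derivatives of $f$. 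For the reduction-of-order variable I would \emph{define} $h^{nn}$ by letting the second output network also be chosen (via the same theorem, or by reading off $\partial_v f_j$) so that $h^{nn}(\cdot;m_{[j]},\dots) \to \partial_v f$ uniformly and $\partial_v h^{nn} \to \partial_{vv} f$ uniformly.

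Next I would estimate each piece of $Loss_{Total}$ in turn. For $Loss_{GE}^2$: since $h^{nn}_j - \partial_v f_j = (h^{nn}_j - \partial_v f) + (\partial_v f - \partial_v f_j) \to 0$ uniformly on $K$, and $K$ has finite measure, $Loss_{GE}^2(f_j) \le |K|\,\|h^{nn}_j - \partial_v f_j\|_{L^\infty(K)}^2 \to 0$. For $Loss_{GE}^1$: the integrand is
\begin{equation}\notag
\bigl|\partial_t f_j + v\,\partial_x f_j - \sigma\,\partial_v h^{nn}_j - \beta\,\partial_v(v f_j)\bigr|^2;
\end{equation}
since $f$ solves \eqref{FPeq} exactly, this differs from zero only through the approximation errors, and because $|v|\le 10$ and $|\partial_v(vf_j)| = |f_j + v\,\partial_v f_j|$ only involves derivatives of order $\le(1,1,2)$, the whole bracket is bounded in $L^\infty(K)$ by $C(\sigma,\beta)\cdot(1/j)$, so $Loss_{GE}^1(f_j)\to 0$. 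For $Loss_{IC}$: $f_j(0,\cdot,\cdot) - f_0 = f_j(0,\cdot,\cdot) - f(0,\cdot,\cdot)$ (using the initial condition \eqref{initial}), bounded by $1/j$ in $L^\infty$, so the integral over $(-1,1)\times V$ tends to $0$. For $Loss_{BC}$: in the inflow case \eqref{inflow} the boundary datum $g$ equals $f$ on $\gamma_-$, so $f_j - g = f_j - f \to 0$ uniformly on $\{-1,1\}\times V$; in the specular case \eqref{specular} one writes $f_j(t,x,v) - f_j(t,x,-v) = (f_j - f)(t,x,v) - (f_j - f)(t,x,-v) \to 0$ uniformly, since $f$ itself satisfies the specular condition. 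Finally $Loss_{\mathrm{Mass}}$: $\frac{d}{dt}\int f_j\,dx\,dv = \int \partial_t f_j\,dx\,dv$, which converges uniformly in $t$ to $\int\partial_t f\,dx\,dv = 0$ by the mass balance law (the boundary flux vanishes for the specular/periodic/diffusive conditions); hence $Loss_{\mathrm{Mass}}(f_j)\to 0$. Summing the finitely many nonnegative terms gives \eqref{forward_loss_0}.

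The step I expect to require the most care is handling the second output $h^{nn}$ consistently with the architecture: Theorem \ref{global} as stated produces a \emph{single} scalar network, whereas the DNN here has a two-component output $(f^{nn}, h^{nn})$ that share hidden layers. The clean way around this is to invoke the multi-index version of Li's theorem so that one network simultaneously approximates $f$ in $C^0$, $C^{(1,0,0)}$, $C^{(0,1,0)}$, $C^{(0,0,1)}$ and $C^{(0,0,2)}$, and then \emph{set} the $h^{nn}$-output to be (a network approximating) $\partial_v f^{nn}$ — this is exactly the reduction-of-order device described after Figure \ref{fig:DNN_structure}, and the remark following Theorem \ref{global} licenses using a multi-layer architecture. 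Once that identification is in place, every remaining estimate is the routine "finite measure times squared sup-norm error" bound, and no energy estimate or PDE structure beyond the exact equation, the initial condition, and the relevant boundary identity is needed. A minor point worth stating explicitly is that all constants $C(\sigma,\beta)$ and $|K|$, $|\Omega\times V|$ are finite and $j$-independent, so the bound is uniform and the limit is genuine.
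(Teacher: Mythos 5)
Your proposal is correct and follows essentially the same route as the paper: invoke Li's simultaneous approximation theorem on the compact set $[0,T]\times[-1,1]\times V$ and bound each loss term by the (finite) measure of its domain times the squared $L^\infty$ approximation error, then set $\varepsilon=1/j$. You are in fact more thorough than the paper's own proof, which treats only the inflow boundary term explicitly and is silent on $Loss_{GE}^2$, the specular case, and $Loss_{\mathrm{Mass}}$; the one caveat in your extra material is that $\int_{(-1,1)\times V}\partial_t f\,dx\,dv$ is not exactly zero on the truncated $v$-domain (a residual flux $\left[\sigma\partial_v f+\beta v f\right]_{v=-10}^{v=10}$ survives), so the mass term converges to that small residual rather than to zero unless one adds a decay assumption at $v=\pm 10$ of the kind the paper later imposes in \eqref{artificial_bdry}.
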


\begin{proof}
Let $\epsilon > 0$ be given. By Theorem \ref{global}, there exists a neural network $$f_{j}(t,x,v) = \sum_{i=1}^{m_{[j],1}} w_{[j],1i}^{(2)} \bar{\sigma}((w_{[j],i1}^{(1)},w_{[j],i2}^{(1)},w_{[j],i3}^{(1)})\cdot(t,x,v)+ b^{(1)}_{[j],i}) + b^{(2)}_{[j],i},$$ such that $\|D^{\underline{k}}f - D^{\underline{k}}f_{j}\|_{L^{\infty}([0,T]\times[-1,1]\times V)} < \epsilon$, where ${\underline{k}}\leq (1,1,2)$ is a multi-index. By integrating $$| \partial_{t}f_j+v\partial_{x}f_j-\partial_{v}(\sigma\partial_{v}+\beta v)f_j|^2$$  and $$ |f_j(0,\cdot,\cdot)- f_0|^{2}$$ over $[0,T]\times [-1,1]\times V\text{, } [-1,1]\times V$, respectively, we obtain that the loss terms \eqref{loss_ge} and \eqref{loss_ic} are bounded by $(1+\mu(V)+\sigma+\beta\mu(V))T^{2}\mu(V)^{2}\varepsilon$ and $2\mu(V)^{2}\varepsilon$, where $\mu(\cdot)$ is the Lebesgue measure on $\mathbb{R}$. Now we assume that the boundary condition satisfies \eqref{inflow}. Then we note that all the boundary values are bounded by a supremum of interior values, since $f$ and $f_j$ are sufficiently smooth. That is,
\begin{align}
\|f_j - g \|_{L^{2}(\gamma^{-}_{T,V})}^{2} = \|f_j - f \|_{L^{2}(\gamma^{-}_{T,V})}^{2} \leq 2T^{2}\mu(V)^{2}\|f_j - f \|_{L^{\infty}(\gamma^{-}_{T,V})}  \nonumber\\
\leq 2T^{2}\mu(V)^{2}\|f_j - f \|_{L^{\infty}([0,T]\times[-1,1]\times V)} \leq 2T^{2}\mu(V)^{2}\varepsilon ,\nonumber
\end{align}where $\gamma^{+}_{T,V}$ and $\gamma^{-}_{T,V}$ are defined as 
\begin{equation}\label{setgammapm}\gamma^{\pm}_{T,V}\eqdef [0,T]\times \gamma_{\pm,V},\end{equation} where
$\gamma_{\pm,V}$ denote the boundarie sets $\gamma_+$ and $\gamma_-$ with the $v$ domain truncated to $V$.
This completes the proof by setting $\varepsilon = \varepsilon_j = \frac{1}{j}$.
\end{proof}
\begin{remark}
The assumption $f\in\widehat{C}^{(1,1,2)}([0,T]\times[-1,1]\times V)$ can be replaced by a general Sobolev space, since the functions in a Sobolev space can be approximated by the continuous functions on a compact set.
\end{remark}
We also remark that Theorem \ref{forward_loss} provides us that we can find the weight of the neural network that reduces the error function as much as we want. However, this does not guarantee that the neural network could converge to the solution of the original equation when the loss function converges to zero, though we will also discuss how to find the weights in the forthcoming sections. Due to the reason, we introduce our second theorem, Theorem \ref{theorem_forward}, which shows that the neural network architecture converges to an analytic solution in a suitable function space when the weights of the neural networks minimize $Loss_{Total}$. We prove it under the following additional condition at the boundary that is consistent to the truncation of the domain in $v$ variable introduced in Section \ref{assumption_for_nn}:
\begin{equation}\label{artificial_bdry}
|\partial^{k}_{v}f(t,x,v)-\partial^{k}_{v}f_j(t,x,v)|\leq \varepsilon \text{ if } v\in \partial V,
\end{equation}for some sufficiently small $\varepsilon>0$ and $k=0, 1$.
\begin{theorem}\label{theorem_forward} Let $\{m_{[j]}, w_{[j]}, b_{[j]}\}_{j=1}^\infty$ be a sequence defined in Theorem \ref{forward_loss} and let $f_j$ and $f$ satisfy \eqref{artificial_bdry}. Then, $Loss_{Total}(f_{j})\rightarrow 0$ implies
\begin{equation}
\left\| f_{j}(\cdot,\cdot,\cdot;m_{[j]},w_{[j]},b_{[j]})  - f\right\|_{L^{\infty}([0,T];L^{2}([-1,1]\times V))}\leq C(\sigma,\beta)\varepsilon^{2},
\end{equation}
where $f$ is a solution to  \eqref{FPeq} and \eqref{initial} with either \eqref{inflow} or \eqref{specular}, and $C(\sigma, \beta)$ is a non-negative constant depending only on $\sigma$ and $\beta$.
\end{theorem}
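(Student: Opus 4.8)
The plan is to control the error $e_j := f_j - f$ by a weighted $L^2$ energy estimate. Since $f$ solves \eqref{FPeq}--\eqref{initial} exactly, while $f_j$ satisfies the reduction-of-order system only up to the residuals measured by $Loss_{GE}=Loss_{GE}^1+Loss_{GE}^2$, subtracting gives
\begin{equation*}
\partial_t e_j + v\partial_x e_j - \partial_v(\sigma\partial_v e_j+\beta v e_j) = \rho_j^{(1)} + \sigma\partial_v\rho_j^{(2)},
\end{equation*}
with $\|\rho_j^{(1)}\|_{L^2}^2\le Loss_{GE}^1$, $\|\rho_j^{(2)}\|_{L^2}^2\le Loss_{GE}^2$, $\|e_j(0,\cdot,\cdot)\|_{L^2}^2= Loss_{IC}$, and the trace of $e_j$ on $\gamma^{-}_{T,V}$ of size controlled by $Loss_{BC}$ (inflow) or obeying the specular relation up to an error controlled by $Loss_{BC}$. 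I would first record that, by Theorem \ref{forward_loss} and the smoothness exploited in its proof, the traces of $f_j$ on $\gamma$ are bounded uniformly in $j$; hence every term below that carries a boundary trace of $f_j$ tends to $0$ together with $Loss_{BC}$, and $Loss_{Total}(f_j)\to0$ forces $Loss_{GE},Loss_{IC},Loss_{BC}\to0$.

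Testing the error equation directly against $e_j$ is not coercive enough, because the Fokker--Planck operator leaves the non-negative term $\tfrac{\beta}{2}\int e_j^2$. Following Carrillo \cite{carrillo1998global}, I would conjugate by the half-Maxwellian $\mathcal M(v):=\exp(-\tfrac{\beta}{4\sigma}v^2)$, writing $e_j=\mathcal M g_j$; a direct computation gives $\mathcal M^{-1}\partial_v(\sigma\partial_v(\mathcal M g)+\beta v\mathcal M g)=\sigma\partial_{vv}g-\tfrac{\beta^2}{4\sigma}v^2g+\tfrac{\beta}{2}g$, so that $g_j$ solves $\partial_t g_j+v\partial_x g_j=\sigma\partial_{vv}g_j-\tfrac{\beta^2}{4\sigma}v^2g_j+\tfrac{\beta}{2}g_j+\mathcal M^{-1}\rho_j^{(1)}+\mathcal M^{-1}\sigma\partial_v\rho_j^{(2)}$. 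Setting $G_j(t):=\tfrac12\int_{[-1,1]\times V}g_j^2\,dx\,dv$ and multiplying by $g_j$ produces, on the right-hand side, the genuine dissipation $-\sigma\int|\partial_v g_j|^2-\tfrac{\beta^2}{4\sigma}\int v^2g_j^2$, the harmless term $\tfrac{\beta}{2}\int g_j^2=\beta G_j(t)$, boundary terms in $x$ and $v$, and the residual contributions. Since $\mathcal M$ is even the specular symmetry is preserved, and since $\mathcal M^{\pm1}$ is bounded above and below on the compact set $V$, $G_j(t)$ is comparable to $\tfrac12\|e_j(t)\|_{L^2_{x,v}}^2$ with constants depending only on $\sigma$, $\beta$ (and the fixed truncation radius).

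It remains to absorb the boundary and residual terms. The transport term yields $-\tfrac12\int_V v\bigl(g_j(t,1,v)^2-g_j(t,-1,v)^2\bigr)dv$; splitting into the outgoing and incoming parts of $\gamma$, the outgoing contributions carry the favorable sign and are discarded, while the incoming ($\gamma_-$) contributions are handled by the inflow data and $Loss_{BC}$, or --- in the specular case --- cancel the outgoing trace up to cross terms bounded through the uniform trace bound for $f_j$ and $\sqrt{Loss_{BC}}$. The diffusion term yields $\sigma[g_j\partial_v g_j]_{v\in\partial V}$ on each $x$-slice; writing $\partial_v g_j=\mathcal M^{-1}(\partial_v e_j+\tfrac{\beta}{2\sigma}v e_j)$ and invoking \eqref{artificial_bdry} for $k=0,1$ bounds it by $C(\sigma,\beta)\varepsilon^2$. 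The differentiated residual $\sigma\partial_v\rho_j^{(2)}$, after integration by parts in $v$, splits into a $\partial V$-boundary piece (controlled by the truncation conventions of Section \ref{assumption_for_nn} and by \eqref{artificial_bdry}) and an interior piece $\lesssim\|\rho_j^{(2)}\|_{L^2}\|\partial_v g_j\|_{L^2}$, which is absorbed into a fraction of $\sigma\int|\partial_v g_j|^2$ by Young's inequality; similarly $\int g_j\mathcal M^{-1}\rho_j^{(1)}\lesssim G_j+\|\rho_j^{(1)}\|_{L^2}^2$. Collecting everything gives $\tfrac{d}{dt}G_j(t)\le\beta G_j(t)+C(\sigma,\beta)\bigl(Loss_{GE}+Loss_{BC}+\sqrt{Loss_{BC}}\bigr)+C(\sigma,\beta)\varepsilon^2$, so Gr\"onwall's inequality together with $G_j(0)\le C(\sigma,\beta)Loss_{IC}$ gives $\sup_{[0,T]}G_j(t)\le C(\sigma,\beta)\bigl(Loss_{Total}(f_j)+\varepsilon^2\bigr)$; letting $j\to\infty$ and returning to $e_j=\mathcal M g_j$ yields the estimate claimed in the theorem.

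I expect the main obstacle to be the boundary terms at $v\in\partial V$ created by the artificial truncation of momentum space: unlike the $x$-boundary, where the physical inflow or specular condition and $Loss_{BC}$ do the work, there is no equation-level control at $\partial V$, which is precisely why the extra hypothesis \eqref{artificial_bdry} is imposed. Closely tied to this is the necessity of the Carrillo conjugation and the careful bookkeeping it forces: without producing a coercive term $\sigma\int|\partial_v g_j|^2$ one cannot absorb either the lower-order part of the Fokker--Planck operator or the differentiated reduction-of-order residual $\sigma\partial_v\rho_j^{(2)}$, and one must take care not to over-spend that dissipation between the residual estimate and the outflow boundary terms.
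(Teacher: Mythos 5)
Your argument is correct in substance and reaches the stated estimate, but it symmetrizes the Fokker--Planck operator differently from the paper. The paper follows Carrillo's \emph{time-dependent dilation} $\bar u(t,x,v)=e^{-(\lambda+\beta)t}u(t,x,e^{-\beta t}v)$, which removes the drift $\partial_v(\beta v\,\cdot)$ entirely and turns the equation into a heat-type equation on the \emph{moving} velocity domain $e^{\beta t}V$; the price is a Leibniz boundary term $B_1(t)$ at $v=\pm10e^{\beta t}$ from the moving domain in addition to the integration-by-parts term $B_2(t)$, both controlled by \eqref{artificial_bdry}, and the free parameter $\lambda$ plays the role of your Gr\"onwall factor. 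Your \emph{stationary} conjugation by $\exp(-\tfrac{\beta}{4\sigma}v^2)$ keeps the domain fixed and trades the drift for the potential $-\tfrac{\beta^2}{4\sigma}v^2$ plus the benign growth term $\tfrac{\beta}{2}g_j$ (note this growth term is no better than what direct testing against $e_j$ yields, so the conjugation buys cleaner bookkeeping of the $\partial V$ terms rather than coercivity per se); since $V$ is compact the weight is bounded above and below by constants depending only on $\sigma,\beta$, so the norms are equivalent and the conclusion is the same. In one respect you are more careful than the paper: you keep the reduction-of-order residual $\sigma\partial_v\rho_j^{(2)}$ as a distributional source and integrate it by parts into the dissipation, whereas the paper bounds $\|Lf_j\|^2_{L^2}$ by $2\,Loss_{GE}$, which tacitly requires controlling $\|\partial_v(h_j-\partial_v f_j)\|_{L^2}$ by $Loss_{GE}^2=\|h_j-\partial_v f_j\|^2_{L^2}$. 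Two small debts remain on your side, though neither is treated more explicitly in the paper: the $\partial V$ boundary term created by that integration by parts carries the trace of $\rho_j^{(2)}=h_j-\partial_v f_j$ at $v=\pm10$, which \eqref{artificial_bdry} does not cover (it constrains only $\partial_v^k(f-f_j)$); and in the specular case the incoming trace matches the outgoing one only up to $d_{bc,j}$, so absorbing the resulting cross term consumes a $(1+\delta)$ multiple of the discarded outgoing flux.
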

\begin{proof}Motivated by \cite{carrillo1998global}, we define a transform $\bar{u}(t,x,v)$ of a function $u(t,x,v)$ as follows:
$$\bar{u}(t,x,v) = e^{-(\lambda +\beta)t}u(t,x,e^{-\beta t}v)$$
where $\lambda$ is any non-negative constant that could control the convergence rate. Then the transformed function $\bar{f}$ satisfies $$\partial_t\bar{f} +e^{-\beta t}(v\cdot\partial_x)\bar{f} -\sigma e^{2\beta t}\partial_{v}^{2}\bar{f} +\lambda\bar{f}=0.$$ We now consider the following set of equations on the difference between $\bar{f}$ and $\bar{f_j}$ for each fixed $j$ as 
\begin{align}
&\left[\partial_{t}+e^{-\beta t}(v\cdot\partial_{x})-\sigma e^{2\beta t}\partial_{v}^{2}+\lambda \right]\{\bar{f}-\bar{f_j}\} \label{diff_ge}\\&\quad\quad\quad\quad= d_{ge,j}(t,x,v), \text{ for } (t,x,v)\in[0,T]\times [-1,1]\times e^{\beta t}V,\notag\\
&\bar{f}(0,x,v)-\bar{f_j}(0,x,v) = d_{ic,j}(x,v), \text{ for } (x,v)\in [-1,1]\times e^{\beta t}V, \label{diff_ic}\\
&\bar{f}(t,x,v)-\bar{f_j}(t,x,v)  = d_{bc,j}(t,x,v), \text{ for } (t,x,v) \in\gamma^{-}_{T,e^{\beta t}V},\label{diff_bc}
\end{align}
where the interval $e^{\beta t}V$ is defined as $$ e^{\beta t}V \eqdef  [-10e^{\beta t}, 10e^{\beta t}],$$ and $\gamma^{+}_{T,e^{\beta t}V}$ and $\gamma^{-}_{T,e^{\beta t}V}$ are defined as in \eqref{setgammapm}. Here, we define $$d_{ge,j}(t,x,v)\eqdef -\left[\partial_{t}+e^{-\beta t}(v\cdot\partial_{x})-\sigma e^{2\beta t}\partial_{v}^{2}+\lambda \right]\bar{f_j},$$ $$d_{ic,j}(x,v)\eqdef \bar{f}_0(x,v)-\bar{f_j}(0,x,v),$$and  $$d_{bc,j}\eqdef \bar{g}(t,x,v)-\bar{f_j}(t,x,v),$$ for the inflow boundary condition \eqref{inflow} and $$d_{bc,j}=\bar{f_j}(t,x,-v)-\bar{f_j}(t,x,v),$$ for the specular boundary condition \eqref{specular} instead. Then we derive the inequality below by multiplying $(\bar{f}-\bar{f_j})$ onto \eqref{diff_ge} and integrating it over $[-1,1]\times e^{\beta t}V$ as
\begin{multline}\label{energy_eq}
\int_{-1}^{1}\int_{-10e^{\beta t}}^{10e^{\beta t}}\frac{\partial}{\partial t}(\bar{f}-\bar{f_j})^{2}(t,x,v)dvdx - 2\sigma e^{2\beta t}\langle\partial_{v}^{2}(\bar{f}-\bar{f_j}),(\bar{f}-\bar{f_j})\rangle\\+2\int_{\gamma_{+,e^{\beta t}V}}(\bar{f}-\bar{f}_j)^{2}d\gamma + 2\lambda \| \bar{f}-\bar{f_j}\|^{2}_{L^{2}([-1,1]\times e^{\beta t}V)} \\
    = 2\int_{\gamma_{-,e^{\beta t}V}}d^{2}_{bc,j}d\gamma+2\langle d_{ge,j},(\bar{f}-\bar{f_j})\rangle, 
\end{multline}
where $\langle\cdot,\cdot\rangle$ denotes the standard inner product on ${L^{2}([-1,1]\times e^{\beta t}V)}$. On the left-hand side of \eqref{energy_eq}, we note that $$\int_{\gamma_{+,e^{\beta t}V}}|\bar{f}-\bar{f_j}|^{2}d\gamma\geq 0,$$ 
\begin{multline}\nonumber
\frac{d}{dt}\left\|(\bar{f}-\bar{f_j})(t,\cdot,\cdot)\right\|^{2}_{L^{2}([-1,1]\times e^{\beta t}V)} = \int_{-1}^{1}\int_{-10e^{\beta t}}^{10e^{\beta t}}\frac{\partial}{\partial t}(\bar{f}-\bar{f_j})^{2}(t,x,v)dvdx \\ + \underbrace{10\beta e^{\beta t}\left(\left\|(\bar{f}-\bar{f_j})(t,\cdot,10e^{\beta t})\right\|^{2}_{L^{2}([-1,1])} + \left\|(\bar{f}-\bar{f_j})(t,\cdot,-10e^{\beta t})\right\|^{2}_{L^{2}([-1,1])}\right)}_{\eqdef B_{1}(t)},
\end{multline}
by the Leibniz rule and
\begin{multline}\nonumber
2\sigma e^{2\beta t}\langle\partial_{v}^{2}(\bar{f}-\bar{f_j}),(\bar{f}-\bar{f_j})\rangle = -2\sigma e^{2\beta t} \left\|\partial_{v}(\bar{f}-\bar{f_j}) \right\|^{2}_{L^{2}([-1,1]\times e^{\beta t}V)} \\  +2\sigma e^{2\beta t}\int_{-1}^{1}\partial_{v}(\bar{f}-\bar{f_j})(\bar{f}-\bar{f_j})(t,\cdot,10e^{\beta t})dx \\-2\sigma e^{2\beta t} \int_{-1}^{1}\partial_{v}(\bar{f}-\bar{f_j})(\bar{f}-\bar{f_j})(t,\cdot,-10e^{\beta t})dx\\
\eqdef -2\sigma e^{2\beta t} \left\|\partial_{v}(\bar{f}-\bar{f_j}) \right\|^{2}_{L^{2}([-1,1]\times e^{\beta t}V)}+ B_{2}(t).
\end{multline}
Therefore, we reduce \eqref{energy_eq} to
\begin{multline}\label{energy_ineq}
\frac{d}{dt}\overbrace{\|\bar{f}-\bar{f_j}\|^{2}_{L^{2}([-1,1]\times e^{\beta t}V)}}^{Y(t)\eqdef } + 2\lambda \left\|\bar{f}-\bar{f_j}\right\|^{2}_{L^{2}([-1,1]\times e^{\beta t}V)} \leq \left\|\bar{f}-\bar{f_j}\right\|^{2}_{L^{2}([-1,1]\times e^{\beta t}V)} \\ + \underbrace{2 \int_{\gamma_{-,e^{\beta t}V}}d^{2}_{bc,j}d\gamma+\|d_{ge,j}\|^{2}_{L^{2}([-1,1]\times e^{\beta t}V)}}_{\eqdef L(t)} + B_{1}(t) + B_{2}(t).
\end{multline}
Multiplying \eqref{energy_ineq} by $e^{(2\lambda-1)t}$ and integrating it over $[0,t]$ for $t<T$, we have
\begin{equation}\label{conclusion}
Y(t)\leq Y(0)e^{-(2\lambda-1)t}+e^{-(2\lambda-1)t}\int_{0}^{t}e^{(2\lambda-1)s}\left(L(s)+B_{1}(s)+B_{2}(s)\right)ds
\end{equation}
Finally, we recall that $Y(t)=e^{-(2\lambda+\beta)t}\|f-f_j\|^{2}_{L^{2}([-1,1]\times V)}$, $Y(0)=Loss_{IC}$, and
\begin{multline}\nonumber
\int_{0}^{t}e^{(2\lambda-1)s}L(s)ds = \\ \int_{0}^{t}e^{(2\lambda-1)s}\left[2e^{-(2\lambda+\beta)s}\int_{\gamma_{-},V}(g-f_j)^2d\gamma+e^{-(2\lambda+\beta)s}\left\|Lf_j\right\|^{2}_{L^{2}((-1,1)\times V)}\right]ds \\ = \int_{0}^{t}\left[2e^{-(\beta+1)s}\int_{\gamma_{-},V}(g-f_j)^2d\gamma+e^{-(\beta+1)s}\left\|Lf_j\right\|^{2}_{L^{2}((-1,1)\times V)}\right]ds \\ \leq 2 (Loss_{BC}+Loss_{GE}),
\end{multline}
where $Lf_j = \left[\partial_{t} +v\partial_{x}-\partial_{v}(\sigma\partial_{v}+\beta v) \right]f_j$ is the Fokker-Planck operator. Moreover, under the assumption on \eqref{artificial_bdry}, we have
$$\int_{0}^{t}e^{(2\lambda-1)s}B_{1}(s)ds\leq 40\beta\varepsilon^{2} \int_{0}^{t}e^{(2\lambda-1)s}e^{-(2\lambda+\beta)s}ds\leq C_{1}(\beta)\varepsilon^{2},$$
$$
\int_{0}^{t}e^{(2\lambda-1)s}B_{2}(s)ds\leq 4\sigma\varepsilon^{2} \int_{0}^{t}e^{(2\lambda-1)s}e^{-(2\lambda+\beta)s}ds \leq C_{2}(\sigma,\beta)\varepsilon^{2}.$$
 Therefore, \eqref{conclusion} and the inverse transform from $\bar{f}$ to $f$ imply that
$$\|f-f_j\|^{2}_{L^{2}([-1,1]\times V)} \leq 2e^{(2\beta+1)t}(Loss_{IC}+Loss_{BC}+Loss_{GE}+C(\sigma,\beta)\varepsilon^{2}),$$
where $C(\sigma,\beta)=C_{1}(\beta)+C_{2}(\sigma,\beta)$. Since $t\in [0,T]$ and $Loss_{Total}(f_{j})\rightarrow 0$, this completes the proof of Theorem \ref{theorem_forward}. 
\end{proof}

\subsection{On the convergence rate of the kinetic energy}\label{sec:conrate}In this section, we would like to record the theoretical background on the convergence rate of the kinetic energy of the system, which will be observed via the neural network simulations as well in Section \ref{sec:results}.  This will be done only for the specular boundary condition \eqref{specular} in this section.
We define the kinetic energy functional of the solution $f$ to the Fokker-Planck equation as follows:
$$\text{KE}(t)\eqdef\frac{1}{2} \int_{(-1,1)\times \rone} dxdv |v|^2 f(t,x,v).$$
Then we can rewrite the balance of kinetic energy \eqref{Balance_KE} as follows:
\begin{equation}\label{dt_KE}
	\frac{d}{dt} \text{KE}(t) = -\frac{1}{2} \int_{\{-1,1\}\times \rone} dxdv (v \cdot n_x) |v|^2 f(t,x,v) - 2\beta \text{KE}(t) + \sigma M,
\end{equation}
where $M=\|f_0(\cdot,\cdot)\|_{L^1_{x,v}}.$ Then the first term on the right-hand side of \eqref{dt_KE} is equal to
\begin{align}
	&-\frac{1}{2} \int_{\{-1,1\}\times \rone} dxdv (v \cdot n_x) |v|^2 f(t,x,v) \notag \\
	=& -\frac{1}{2} \left( \int_{-\infty}^\infty dv \ (-v) |v|^2 f(t,x=-1,v) + \int_{-\infty}^\infty dv \ v |v|^2 f(t,x=1,v) \right).\label{dt_KE_2}
\end{align}
Under the specular boundary conditon \eqref{specular}, both two terms on the right-hand side of \eqref{dt_KE_2} is $0$, since
\begin{align*}
	&\int_{-\infty}^\infty dv (-v) |v|^2 f(t,x=-1,v)\\
	=& \int_{-\infty}^0 dv (-v) |v|^2 f(t,x=-1,v) + \int_0^\infty dv (-v) |v|^2 f(t,x=-1,v) \\
	=& \int_0^\infty dv\ v |v|^2 f(t,x=-1,-v) + \int_0^\infty dv (-v) |v|^2 f(t,x=-1,v) \\
	=& \int_0^\infty dv\ v |v|^2 f(t,x=-1,v) - \int_0^\infty dv\ v |v|^2 f(t,x=-1,v) = 0,
\end{align*}
and $\int_{-\infty}^\infty dv \ v |v|^2 f(t,x=1,v)$ is also $0$ in a similar manner. Therefore, we can rewrite \eqref{dt_KE} as
\begin{equation}\label{dt_KE_3}
	\frac{d}{dt} \text{KE}(t) + 2\beta \text{KE}(t) = \sigma M.
\end{equation}Therefore, we obtain
\begin{equation*}
	\text{KE}(t) e^{2\beta t} = \frac{\sigma M}{2\beta}e^{2\beta t} + C
\end{equation*}
by multiplying \eqref{dt_KE_3} by $e^{2\beta t}$ and integrating over $[0,t]$, where $C$ is a constant. We can get the closed-form of the kinetic energy under the specular boundary condtions for the kinetic Fokker-Planck equation as
\begin{equation*}
	\text{KE}(t) = \frac{\sigma M}{2\beta} + Ce^{-2\beta t}.
\end{equation*}
Now, for the total kinetic energy of the appoximated solution $f^{nn}(t,x,v;m,w,b)$ which is defined as
$$\text{KE}^{nn}(t;m,w,b)\eqdef\frac{1}{2} \int_{(-1,1)\times V} dxdv |v|^2 f(t,x,v),$$
the kinetic energy $\text{KE}^{nn}(t;m,w,b)$ satisfies
\begin{equation}\label{KE_func}
	\text{KE}^{nn}(t;m,w,b) \leq \text{KE}(t) = \frac{\sigma M}{2\beta} + Ce^{-2\beta t},
\end{equation}
since the truncated domain of velocity variable $V=[-10,10]$ is contained in $\rone$. This shows that the kinetic energy of the neural network solution converges faster to the steady-state value as the value of the coefficient $\beta$ gets larger.

\section{Neural Network Simulations}\label{sec:results}
In this section, we introduce the results of our neural network simulations for the DNN solution $f^{nn}(t,x,v;m,w,b)$ in three different ways. We first simulate our neural network algorithms for the varied boundary conditions, a given initial condition and the fixed values of $\sigma$, $\beta$ coefficients. In the cases of the specular reflection, the diffusive reflection, and the periodic boundaries, we expect that the Lyapunov functional (also called as the free energy or the relative entropy) satisfies $\eta'\le 0$, which is a manifestation of the second law of thermodynamics. Then, we alter the initial conditions for the specular boundary condition and for the fixed values of $\sigma$, $\beta$ coefficients. Lastly, we also obtain the results via altering the coefficients $\sigma$ and $\beta$ for a given initial condition and the specular boundary condition.

We analyze our neural network solution $f^{nn}(t,x,v;m,w,b)$ via computing the pointwise values at each grid-point, observing the $L^\infty$ norm of the solution and via computing the physical quantities of the total mass, the kinetic energy, the entropy, and the free energy. We compute these quantities via the approximations of the integration by the Riemann sum on the grid points, similarly to the previous subsection. For example, $L^1$ norm of $f^{nn}(t,x,v;m,w,b)$ for each time $t$ can be approximated as
$$\int_{(-1,1)}\int_{\rone} |f(t,x,v)| dxdv \approx \frac{1}{N_{j,k}} \sum_{j,k} |f(t,x_j,v_k)|.$$
We also compare our results with the existing thoretical results (c.f. \cite{MR1414375}). Especially for the specular \eqref{specular} and the diffusive \eqref{diffusive} boundary conditions which conserve the total mass, Bonilla-Carrillo-Soler \cite{MR1414375} gives the form of the steady-state as given in \eqref{maxwellian}.  The expected limiting values of kinetic energy, entropy, and free energy based on the global equilibrium \cite{wollman2008deterministic} are defined as follows:
\begin{align}
  \text{KE}_\infty
  &= \frac{\sigma M}{2\beta}, \label{KE_inf}\\
 \text {Ent}_\infty
  &= -M\log\left(\frac{M}{C(2\pi\frac{\sigma}{\beta})^{0.5}}\right)+\frac{1}{2}M\\
  \text{FE}_{\infty} &= \text{KE}_\infty -\frac{\sigma}{\beta}\text{Ent}_\infty  , \label{Ent_inf}
\end{align}
where $M=\|f_0(\cdot,\cdot)\|_{L^1_{x,v}}$ and $C=|\Omega|=2$ for our case.
Also, we calculate the entropy as the integral of $-f\log (f+10^{-10})$ to prevent from the divergence when $f(t,x,v)=0$.

We predict the pointwise values of the distribution functions $f^{nn}$ for each spatial variable $x$ in $[-1,1]$ and the velocity variable $v$ in $[-10,10]$ corresponding to the varied values of time variable $t$ in $[0,T]$. Namely, we observe the long-time behavior of neural network solution $f^{nn}(t,x,v;m,w,b)$ and check if the solution converges to Maxwellian \eqref{maxwellian} for the reflection-type boundaries. Note that minimizing the total loss does not guarantee a positivity of $f^{nn}$. Therefore, we also truncate $f^{nn}$ if $f^{nn}$ is sufficiently small ($<$ 0.005).

\subsection{Results by varying the boundary conditions}In this section, we impose different types of the boundary conditions that we introduced in Section \ref{sec:boundary}. Throughout this section, we set the values of the coefficients $\sigma$ and $\beta$ to be 1, and we consider the following \textit{cake-shaped} initial condition for the varied boundary conditions:
\begin{equation}\label{initial_cake}
f(0,x,v)=f_0(x,v) = \begin{cases}
1, &\text{ if $(x,v)\in (-0.9,0.9)\times  (-2,2),$}\\
0, &\text{ otherwise}.
\end{cases}
\end{equation}

We would first like to introduce the behavior of the approximated solutions under the absorbing boundary condition in the following section.
\subsubsection{The absorbing boundary condition}
By imposing the absorbing boundary condition (\ref{absorbing}), we would like to understand the dynamics of particles whose momenta vanish when they collide against the boundary. 
Figure \ref{fig:absorbing_physical} shows the $L^\infty$ norm, total mass, total kinetic energy, and the entropy of our neural network solution $f^{nn}(t,x,v;m,w,b)$ under the absorbing boundary condition. It has been shown in \cite{hwang2014fokker} that the $L^\infty$ norm and the total mass of the solutions $f(t,x,v)$ to the kinetic Fokker-Planck equation decay exponentially as the time variable $t$ goes to infinity when the friction coefficient $\beta$ is zero. In our case with the diffusion coefficient $\sigma=1$ and the friction coefficient $\beta=1$, the total mass decays in time as in Figure \ref{fig:absorbing_physical}, while the $L^\infty$ norm decays after an initial mixing and climbing.
\begin{figure}[H]
	\includegraphics[width=\textwidth, draft=false]{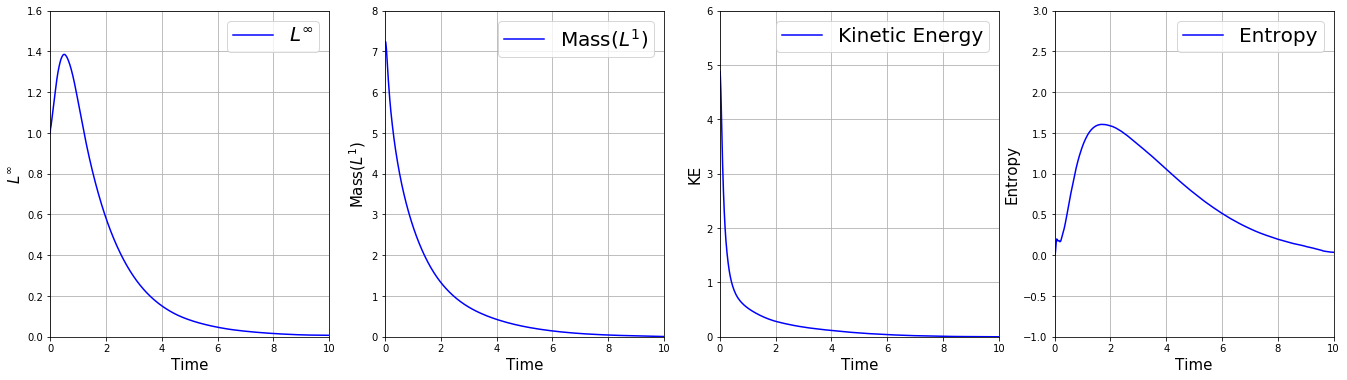}
	\caption{The time-asymptotic behavior of the $L^\infty$ norm and the macroscopic quantities of $f^{nn}(t,x,v;m,w,b)$ with the absorbing boundary condition. All quantities converges to zero as all the particles vanish once they reach the boundary.}
	\label{fig:absorbing_physical}
\end{figure}

Also, we can observe that our solution $f^{nn}(t,x,v;m,w,b)$ converges pointwisely to $0$, as shown in Figure \ref{fig:absorbing_f}. We remark that Figure \ref{fig:absorbing_f} shows each pointwise value of the neural network solution, and this definitely gives more information than just $L^1$ and $L^\infty$ values of the solution. 
\begin{figure}[H]
	\includegraphics[width=0.8\textwidth, draft=false]{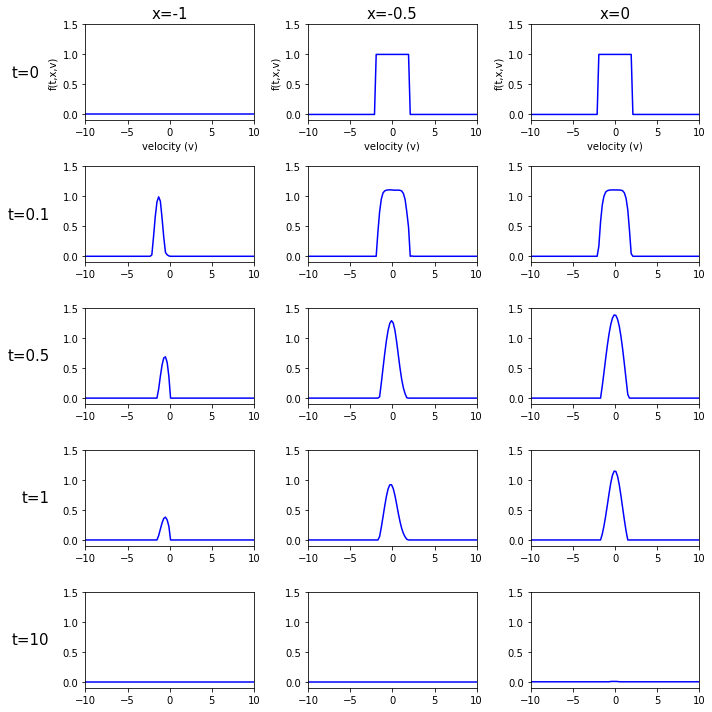}
	\caption{The pointwise values of $f^{nn}(t,x,v;m,w,b)$ as $t$ varies at each $x$'s for the absorbing boundary case. $x=-1$ stands for the boundary point, and $x=-0.5$ or $=0$ are the points away from the boundary. We omit the graphs for $x>0$ as we have obtained the symmetric graphs to $x<0$. }
	\label{fig:absorbing_f}
\end{figure}

\subsubsection{Inflow boundary condition}We now move onto the next boundary condition, the inflow boundary condition, with which we consider the situation that each boundary bounces particles in a given rate of the given function $g(t,x,v)$.  
We compare the pointwise values of the neural network solutions under three different inflow boundary conditions. The three different inflow boundary conditions are:
\begin{equation}\label{inflow_1}
 f(t,x,v)|_{\gamma_-}=g(t,x,v)=\frac{1}{2} \mathbbm{1}_{|v|\leq5},\ \text{for} \ x=-1\text{ and }1,
 \end{equation}
\begin{equation}\label{inflow_2}
 f(t,x,v)|_{\gamma_-}=g(t,x,v)=\begin{cases}\frac{1}{10} \mathbbm{1}_{|v|\leq5}, &\text{ if $x=-1$}\\
\frac{9}{10} \mathbbm{1}_{|v|\leq5}, &\text{ if $x=1$},
\end{cases}
 \end{equation}and
\begin{equation}\label{inflow_3}
 f(t,x,v)|_{\gamma_-}=g(t,x,v)=\frac{1}{2}e^{-t}\mathbbm{1}_{|v|\leq5},\ \text{for} \ x=-1\text{ and }1,
 \end{equation} where $\mathbbm{1}_{\mathbbm{S}}$ is the characteristic function on a set $\mathbbm{S}$.

In Figure \ref{fig:inflow_f}, the plot for the third type of inflow boundary condition (\ref{inflow_3}) has the similar pointwise values of $f^{nn}(t,x,v;m,w,b)$ to that of the first inflow boundary condition \eqref{inflow_1}, but it behaves like or converges to that of the absorbing boundary condition over time, as shown in the cyan-blue line. This converges pointwisely to $0$ after 10 time grids. On the other hand, we can observe that the other two types \eqref{inflow_1} and \eqref{inflow_2} converge to some distorted \textit{Maxwellian-like} shapes at each spatial position. Also, we can observe the smoothing effect of the kinetic Fokker-Planck equation in the interior domain at $x=-0.5,0$, and $0.5$. At the boundaries $x=-1$ and $x=1$, we observe the jump discontinuities of the momentum derivatives as expected due to the fixed inflow profiles. 
\begin{figure}[H]
	\includegraphics[width=\textwidth, draft=false]{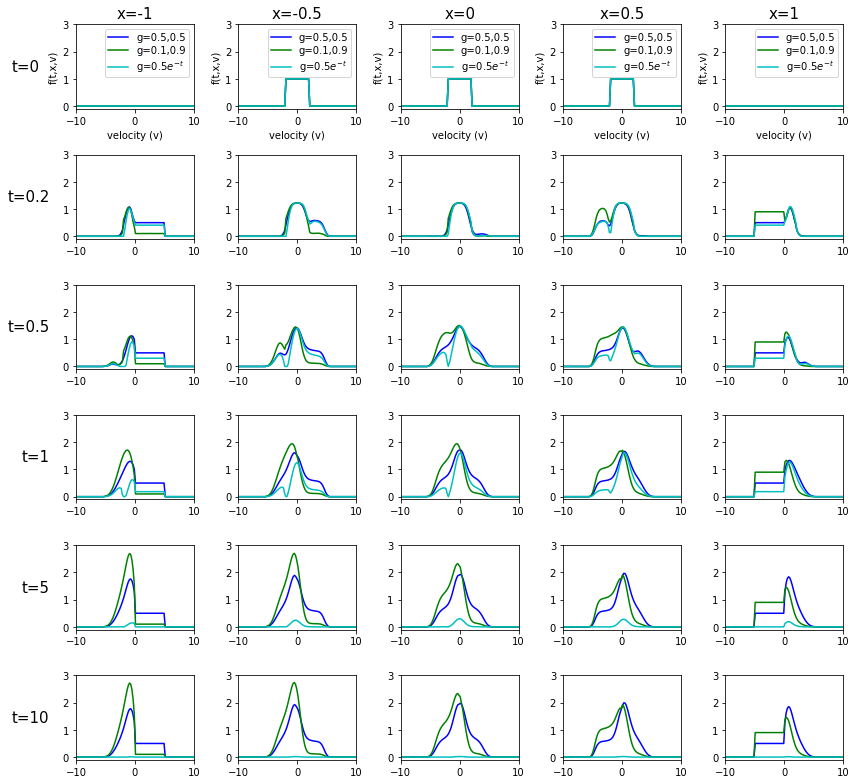}
	\caption{The pointwise values of $f^{nn}(t,x,v;m,w,b)$ as $t$ varies at each $x$'s for the different inflow boundary cases in different colors as shown in the legend. $x=\pm1$ stand for the boundary points, and $x=\pm 0.5$ and $=0$ are the points away from the boundary.}
	\label{fig:inflow_f}
\end{figure}

\subsubsection{The three boundary conditions which conserve the total mass}Now we would like to impose other types of the boundary conditions under which the total mass of the system is conserved. Namely, we will look at the dynamics of the particles under the specular boundary condition (\ref{specular}), the periodic boundary condition (\ref{periodic}), and the diffusive boundary condition (\ref{diffusive}).

We first remark that we obtain the similar behavior of the $L^\infty$ and $L^1$ norms of the solution in the three types of the boundary conditions. Though the convergence rates are slightly different, we could possibly say that the kinetic energies and the entropies of the three different cases converge to the same values in the red dotted line in Figure \ref{fig:conserve_physical}.
\begin{figure}[H]
	\includegraphics[width=\textwidth, draft=false]{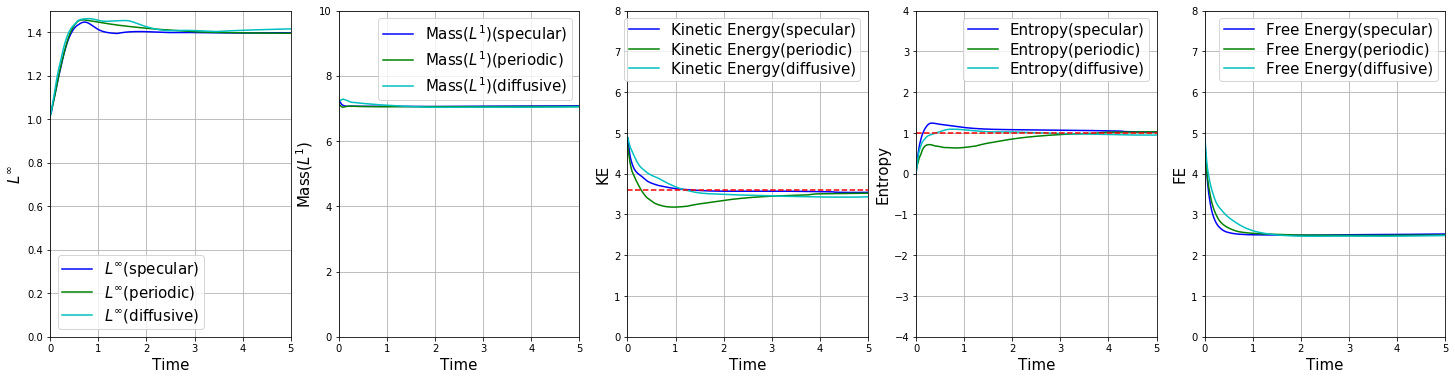}
	\caption{The time-asymptotic behaviors of the $L^\infty$ norms and the macroscopic quantities of $f^{nn}(t,x,v;m,w,b)$ with the specular, the periodic, and the diffusive boundary conditions, which conserve the total mass. The steady-state values of the kinetic energy (\ref{KE_inf}) and the entropy (\ref{Ent_inf}) are also given via the red-dotted lines. It is notable that the free energy (Lyapunov functional) is monotonically decreasing.}
	\label{fig:conserve_physical}
\end{figure}

In the perspective of the pointwise values of the neural network solutions, Figure \ref{fig:conserve_f} shows that the three cases converge pointwisely to the same steady-state solution; i.e., they converge to the global Maxwellian solution (\ref{maxwellian}). It is remarkable that the shape of the convergence to the global Maxwellian is slightly different in the diffusive boundary case from the other two conditions at the boundary point $x=-1.$ At the boundary $x=-1$, the periodic and the specular cases converge to the Maxwellian via the superposition of two waves from the left- and the right-hand sides at the same rate and hence via the creation of M-shaped plots. On the other hand, at the boundary $x=-1$, the solution in the case of the diffusive boundary condition converges to the global Maxwellian from the left to the right, as the diffusive boundary condition at $x=-1$ emits the Maxwellian-shaped values from the left to the right.
\begin{figure}
	\includegraphics[width=0.8\textwidth, draft=false]{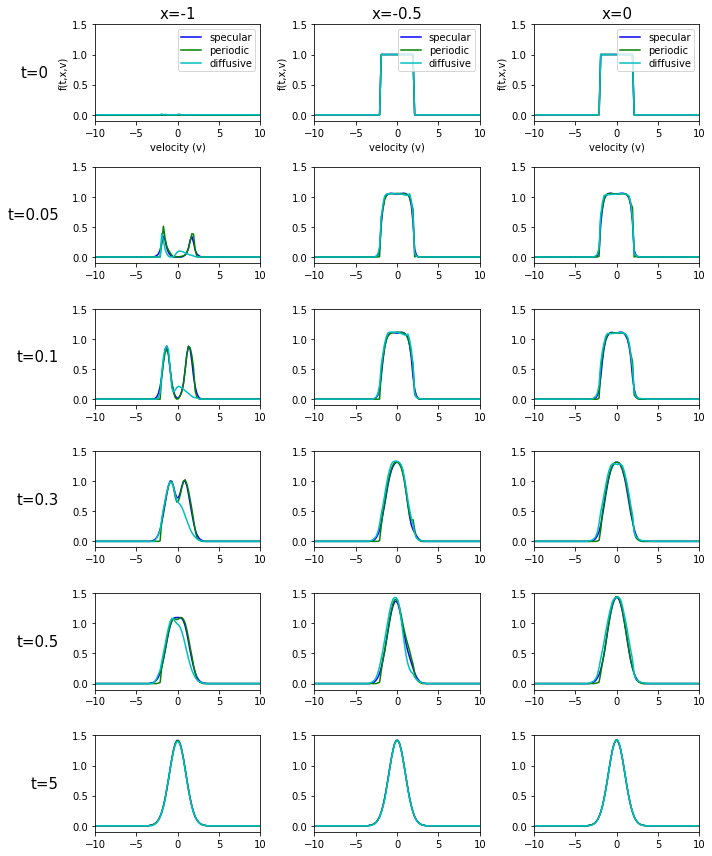}
	\caption{The pointwise values of $f^{nn}(t,x,v;m,w,b)$ as $t$ varies at each $x$'s for the specular, the periodic, and the diffusive boundary cases in different colors as shown in the legend. $x=-1$ stands for the boundary point, and $x=-0.5$ or $=0$ are the points away from the boundary. We omit the graphs for $x>0$ as we have obtained the symmetric graphs to $x<0$.}
	\label{fig:conserve_f}
\end{figure}

\subsection{Results by varying the initial conditions}
In this section, we vary the initial condition for the fixed coefficients $\sigma=\beta=1$ and the specular boundary condition. We impose the initial conditions as
\begin{equation}\label{initial_v2}
f(0,x,v)=f_0(x,v) = \begin{cases}
\frac{1}{25}v^2, &\text{ if $x\in (-0.9,0.9)$ and $v\in (-5,5)$}\\
0 ,&\text{ otherwise},
\end{cases}
\end{equation}and
\begin{equation}\label{initial_sin}
f(0,x,v)=f_0(x,v) = \begin{cases}
\sin(\frac{1}{v^2}), &\text{if $x\in (-0.9,0.9)$ and $v\in (-10,10)$}\\
0, &\text{otherwise}.
\end{cases}
\end{equation}In other words, we impose a M-shaped initial condition with the values compiled at large values in $v$ and a highly-oscillating initial condition near $v=0$. 

Figure \ref{fig:initial_f} shows the pointwise values of $f^{nn}(t,x,v;m,w,b)$ with the two initial conditions. We first note that the M-shaped distribution becomes the Maxwellian over time. Though the initial condition \eqref{initial_sin} is highly singular in its derivatives, Figure \ref{fig:initial_f} shows that $f^{nn}(t,x,v;m,w,b)$ is being regularized and converges to the Maxwellian over time. We remark that the two initial conditions result in two different Maxwellians and this difference comes from the different total masses of the two initial conditions.
\begin{figure}
	\includegraphics[width=0.8\textwidth, draft=false]{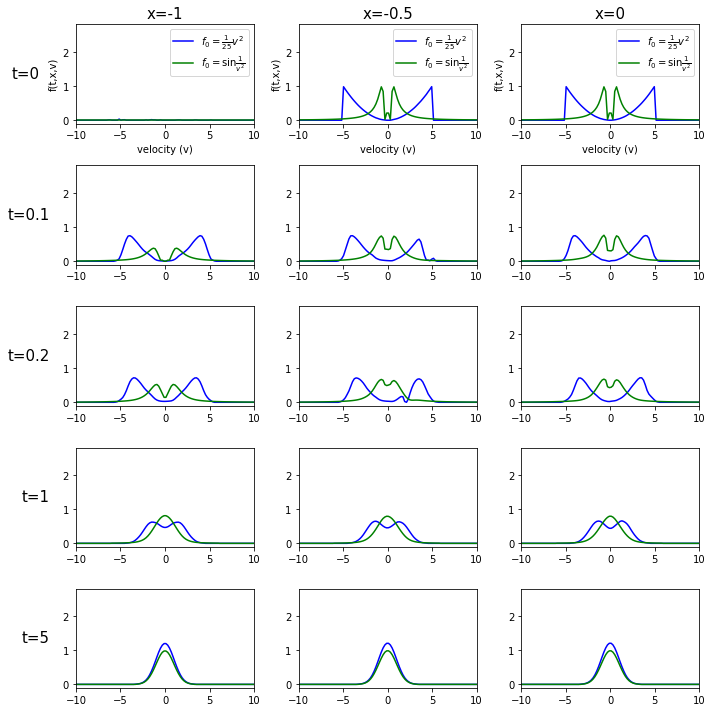}
	\caption{The pointwise values of $f^{nn}(t,x,v;m,w,b)$ as $t$ varies at each $x$'s for the specular boundary case with the special conditions drawn in different colors as shown in the legend. $x=-1$ stands for the boundary point, and $x=-0.5$ or $=0$ are the points away from the boundary. We omit the graphs for $x>0$ as we have obtained the symmetric graphs to $x<0$.}
	\label{fig:initial_f}
\end{figure}

\subsection{Results by varying the diffusion and the friction coefficients}
In this setion, we alter the two coefficients $\sigma$ and $\beta$ of the Fokker-Planck equation; the coefficients $\sigma$ and $\beta$ are related to the diffusion and the friction rates, respectively. Throughout the section, we fix the initial condition \eqref{initial_cake} and the specular boundary condition \eqref{specular}. 

We mention that, throughtout the section, we plot the values of the kinetic energy (\ref{KE_inf}) and the entropy (\ref{Ent_inf}) of the global Maxwellian solution for each value of $\sigma$ and $\beta$ in the red dotted lines. In the following three sub-sections, we alter the values of $\beta$ with a fixed $\sigma$, alter the values of $\sigma$ with a fixed $\beta$, and alter both $\sigma$ and $\beta$ by keeping the ratio $\frac{\sigma}{\beta}$ being the same.

\subsubsection{Different values of $\beta$}Throughout the section, we set $\sigma=1$ and let $\beta=2,1,0.5,$ and $0.25$. Figure \ref{fig:diff_beta_physical} shows that the kinetic energy and the entropy converge to the values of the red dotted lines of the expected steady-states for each different value of $\beta$. For example, the value of the kinetic energy for the steady-state with the coefficients $\sigma=1$ and $\beta=0.5$ is ${KE}_\infty = \frac{\sigma M}{2\beta} = 7.2$ by (\ref{KE_inf}), and the cyan-blue colored graph in the third plot of Figure \ref{fig:diff_beta_physical} converges to the red dotted line of the value 7.2. In addition, the graph for the $L^\infty$ norms shows that the $L^\infty$ values after a sufficiently large time become larger as $\beta$ gets larger and this agrees the expected $L^\infty$ values of the steady-state by \eqref{maxwellian}.
 
We remark that the rate of the convergence of the kinetic energy and the entropy for each different $\beta$ depends on the values of $\beta$; the solution $f^{nn}(t,x,v;m,w,b)$ with a larger value of the friction coefficient $\beta$ converged rapidly. This is also consistent to our theoretical supports provided in Section \ref{sec:conrate}.
\begin{figure}
	\includegraphics[width=\textwidth, draft=false]{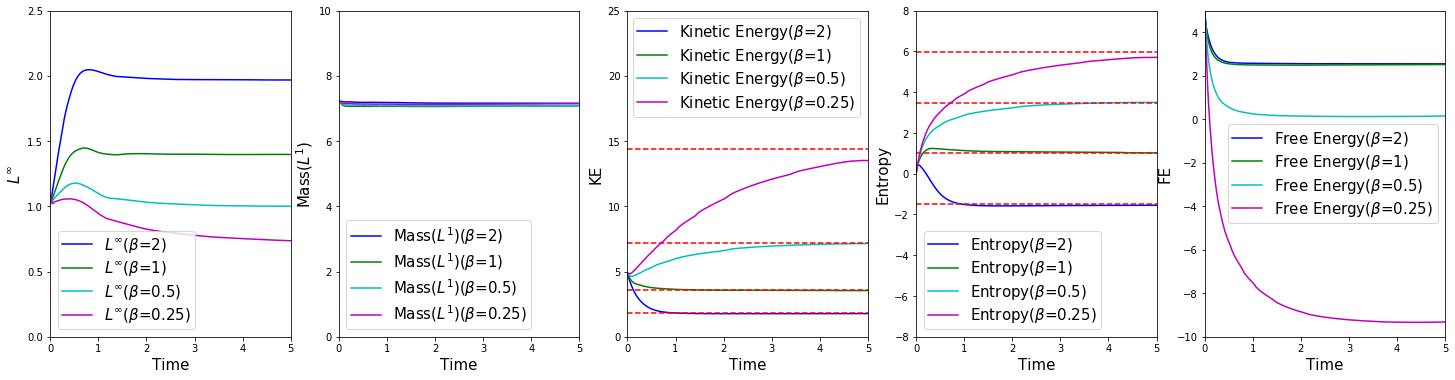}
	\caption{The time-asymptotic behaviors of the $L^\infty$ norms and the macroscopic quantities of $f^{nn}(t,x,v;m,w,b)$ for the specular boundary case as $\beta$ varies with the fixed $\sigma=1$. The steady-state values of the kinetic energy (\ref{KE_inf}) and the entropy (\ref{Ent_inf}) are also given via the red-dotted lines for each different $\beta$. It is notable that the free energy (Lyapunov functional) is monotonically decreasing.}
	\label{fig:diff_beta_physical}
\end{figure} 

Figure \ref{fig:diff_beta_f} shows that the pointwise values of $f^{nn}(t,x,v;m,w,b)$ approach the different steady-state solutions. Theoretically, the steady-state solution with the smaller friction coefficient $\beta$ makes the larger variance in the Maxwellian \eqref{maxwellian} with the same mean $0$, and Figure \ref{fig:diff_beta_f} agrees with the theory.
\begin{figure}
	\includegraphics[width=0.8\textwidth, draft=false]{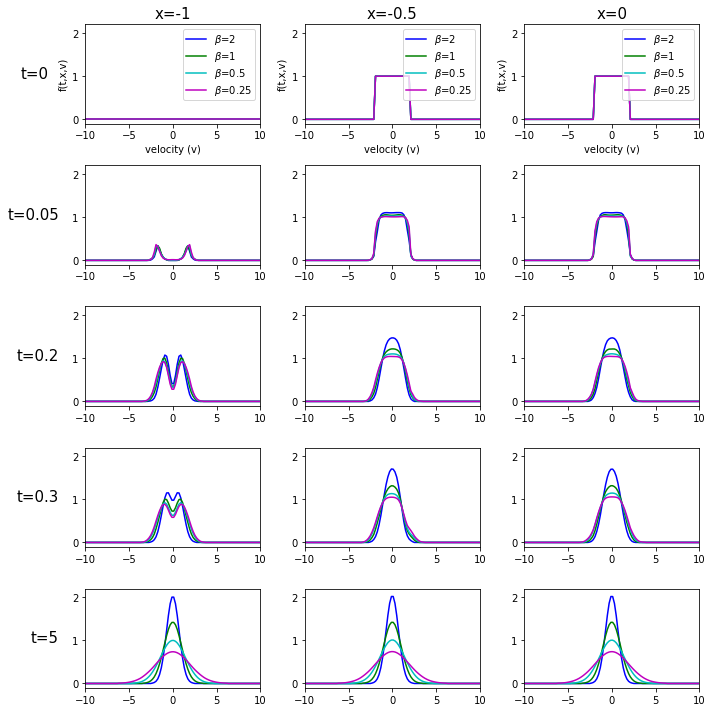}
	\caption{The pointwise values of $f^{nn}(t,x,v;m,w,b)$ as $t$ varies at each $x$'s for the specular boundary case, where $\sigma$ is fixed to be $=1$ and $\beta$ varies as shown in different colors as in the legend. $x=-1$ stands for the boundary point, and $x=-0.5$ or $=0$ are the points away from the boundary. We omit the graphs for $x>0$ as we have obtained the symmetric graphs to $x<0$.}
	\label{fig:diff_beta_f}
\end{figure}

\subsubsection{Different values of $\sigma$}In this section, we show the result with the fixed $\beta=1$ and the varied $\sigma$ coefficients of $\sigma=2,1,0.5,0.25$. Figure \ref{fig:diff_sigma_physical} shows that the physical quantities converge to those of the steady-state.
\begin{figure}
	\includegraphics[width=0.8\textwidth, draft=false]{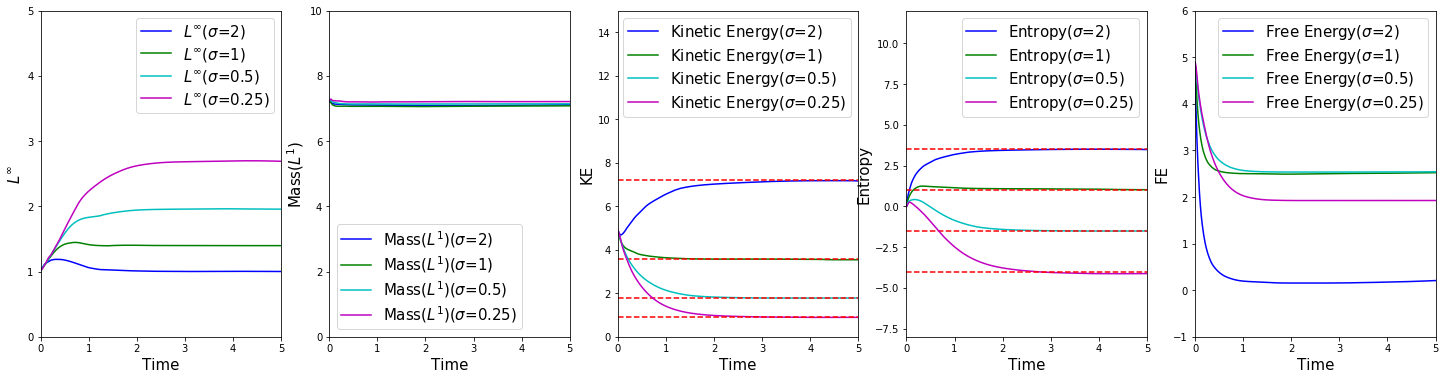}
	\caption{The time-asymptotic behaviors of the $L^\infty$ norms and the macroscopic quantities of $f^{nn}(t,x,v;m,w,b)$ for the specular boundary case as $\sigma$ varies with the fixed $\beta=1$. The steady-state values of the kinetic energy (\ref{KE_inf}) and the entropy (\ref{Ent_inf}) are also given via the red-dotted lines for each different $\sigma$. It is notable that the free energy (Lyapunov functional) is monotonically decreasing.}
	\label{fig:diff_sigma_physical}
\end{figure}Theoretically, the steady-state solution with the smaller diffusion coefficient $\sigma$ makes the smaller variance in the Maxwellian \ref{maxwellian} with the same mean $0$, and Figure \ref{fig:diff_sigma_f} agrees with the theory.
\begin{figure}
	\includegraphics[width=0.8\textwidth, draft=false]{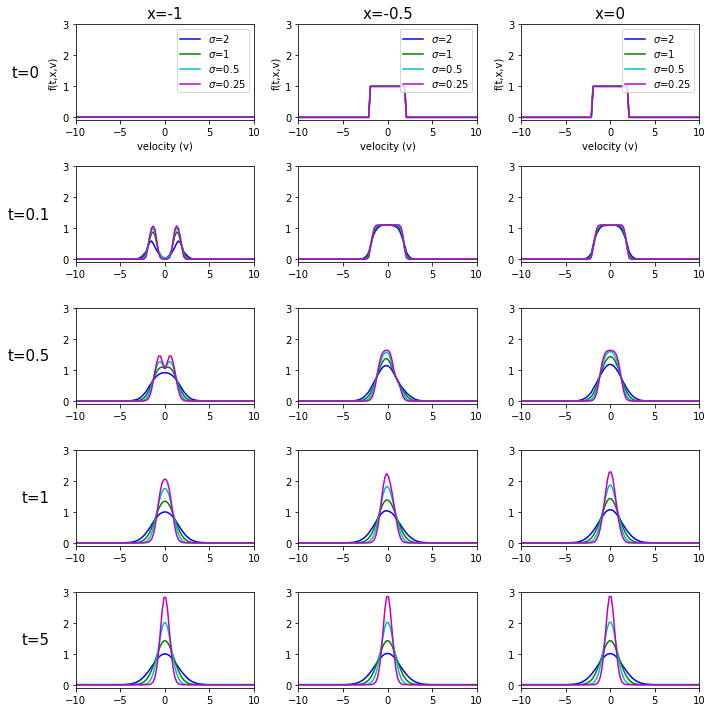}
	\caption{The pointwise values of $f^{nn}(t,x,v;m,w,b)$ as $t$ varies at each $x$'s for the specular boundary case, where $\beta$ is fixed to be $=1$ and $\sigma$ varies as shown in different colors as in the legend. $x=-1$ stands for the boundary point, and $x=-0.5$ or $=0$ are the points away from the boundary. We omit the graphs for $x>0$ as we have obtained the symmetric graphs to $x<0$.}
	\label{fig:diff_sigma_f}
\end{figure}

\subsubsection{Different values of $\sigma$ and $\beta$ in the same ratio $\frac{\sigma}{\beta}$}In this section, we vary both coefficients by keeping the ratio $\frac{\sigma}{\beta}$ the same.
We set $\frac{\sigma}{\beta}=\frac{1}{2}$ and vary the coefficients $\sigma$ and $\beta$ as $\sigma=1,0.5,0.25,0.05$ and $\beta=2\sigma$. The convergence to the same physical quantities of the same steady-state in Figure \ref{fig:frac_physical} agrees with the theory, as the steady-state (\ref{maxwellian}) is determined by the fraction of the coefficient $\frac{\sigma}{\beta}$. The figure shows that the coefficient $\sigma=0.05$ is too small to make the quantities converge within 5 time-grids. The difference between the four cases is how fast the kinetic energy and the entropy of DNN solutions converge to those of the steady-state. A larger value of $\beta$ results in the faster convergence though the fractions of the coefficients $\frac{\sigma}{\beta}$ are the same.
\begin{figure}
	\includegraphics[width=0.8\textwidth, draft=false]{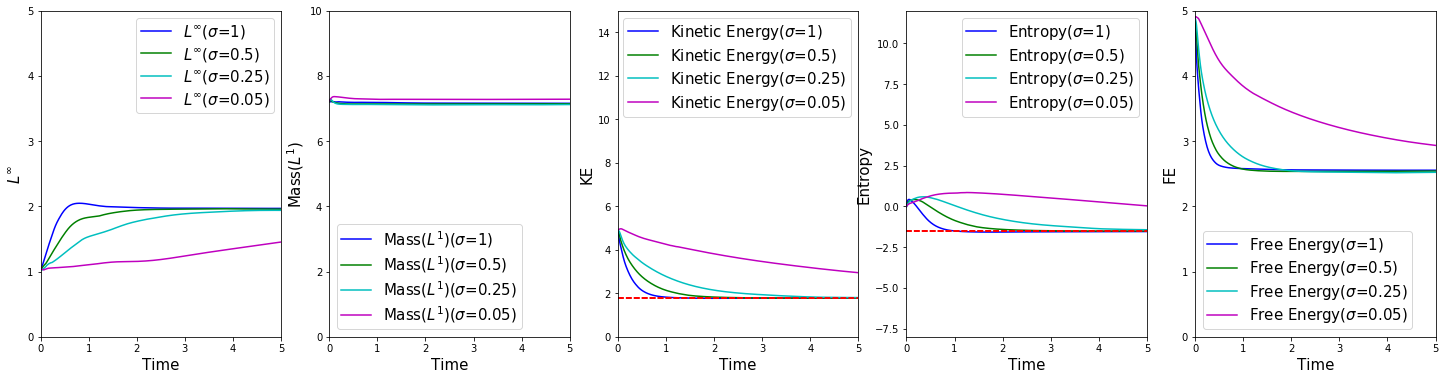}
	\caption{The time-asymptotic behaviors of the $L^\infty$ norms and the macroscopic quantities of $f^{nn}(t,x,v;m,w,b)$ for the specular boundary case as both $\beta$ and $\sigma$ vary by keeping the ratio $\frac{\sigma}{\beta}=\frac{1}{2}.$ The steady-state values of the kinetic energy (\ref{KE_inf}) and the entropy (\ref{Ent_inf}) are also given via the red-dotted lines. It is notable that the free energy (Lyapunov functional) is monotonically decreasing.}
	\label{fig:frac_physical}
\end{figure}
Figure \ref{fig:frac_f} shows the different rate of the convergence to the steady-state; i.e., the four graphs in each plot converge to the same maxwellian but at different rates.
\begin{figure}
	\includegraphics[width=0.8\textwidth, draft=false]{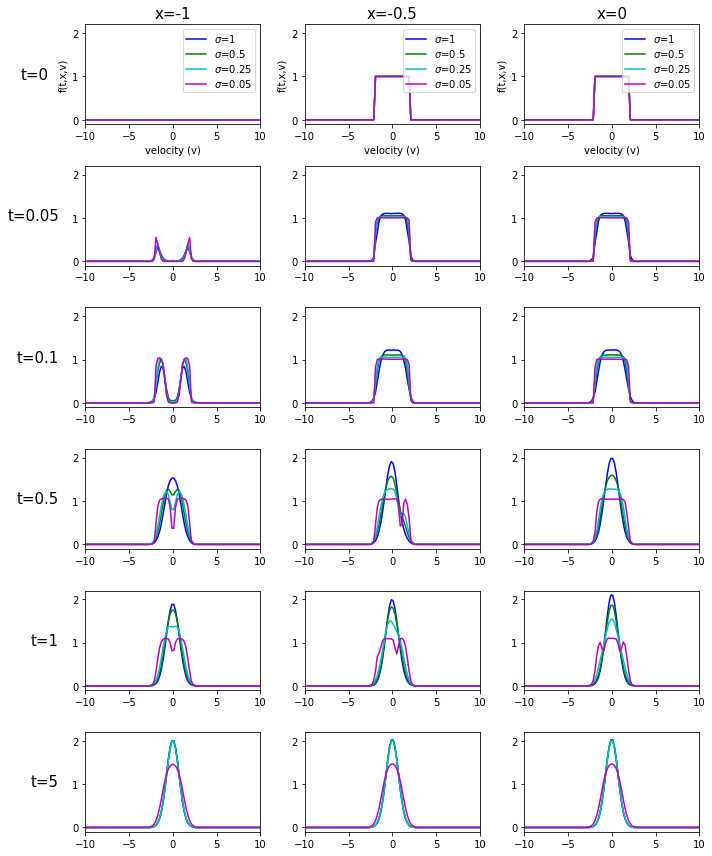}
	\caption{The pointwise values of $f^{nn}(t,x,v;m,w,b)$ as $t$ varies at each $x$'s for the specular boundary case, where both $\sigma$ and $\beta$ vary with the ratio fixed as $\frac{\sigma}{\beta} =\frac{1}{2}$. The plots are shown in different colors as in the legend. $x=-1$ stands for the boundary point, and $x=-0.5$ or $=0$ are the points away from the boundary. We omit the graphs for $x>0$ as we have obtained the symmetric graphs to $x<0$.}
	\label{fig:frac_f}
\end{figure}

\section{Conclusion}\label{sec:conclusion}This paper presents an approximation of the 1D Fokker-Planck equation solution with the inflow-type or the reflection-type boundary conditions. We first provide a proof for the existence of a sequence of DNN solutions such that the total loss, defined in terms of the difference between the $\widehat{C}^{(1,1,2)}$-type solutions and the neural network solutions, converges to zero.  We then provide a proof for the $L^2_{x,v}$ convergence of the sequence of neural network solutions to the actual $\widehat{C}^{(1,1,2)}$-type solutions to the original Fokker-Planck equation in a bounded interval, in the case that the total sum of the loss terms of our own goes to zero. 
In other words, it has been shown that we can reduce the predefined loss function via the appropriate selection of DNN model parameters. Also, via choosing the model parameters that reduce the loss function as desired, it has been shown that the neural network solutions converge to the analytic solution by obtaining an $L^2$ energy estimate based on the adaptation of the transformation of the function in the sense of \cite{carrillo1998global}.

For the neural network simulations, our DNN algorithm uses the library \textit{PyTorch} and the hyper-tangent \textit{tanh} activation function between each layer. Regarding the optimization algorithm, we use \textit{Adam} optimizer which is an extended algorithm of the stochastic gradient descent in the process of minimizing the actual loss in the case of the reflection-type boundary conditions. The reduction of the order and the weight sharing between $f^{nn}$ and $\partial_v f^{nn}$ dramatically reduced the learning cost. We remark that we also increased the learning efficiency of the model by adding the mass conservation law to the total loss function. In addition, we have numerically confirmed several theoretical predictions on the asymptotic behaviors of the solutions to the equation and the entropy production; i.e., we were able to provide the numerical simulations on the pointwise convergence of the neural network solutions to the global Maxwellian for varied types of the boundary conditions and the varied coefficients for the diffusion and the friction and observed that the free energy (the relative entropy) is monotonically decreasing. 

The possible extensions of our work for the future include the generalizations to other various types of the kinetic equations. These include the multi-dimensional (fractional) Fokker-Planck equation, the coupled Vlasov equation such as the Vlasov-Poisson or the Vlasov-Maxwell systems, and the Boltzmann-type collisional kinetic equations. We expect that our work can further be extended to the generalized situations listed above via the adaptation of the sampling of random grids to our learning algorithm so the total cost for the learning process of the DNN algorithm is being effectively reduced while the total loss function is being properly reduced at the same time. 

\section*{Acknowledgement}
H. J. Hwang, H. Jo, and J. Y. Lee were supported by the Basic Science
Research Program through the National Research Foundation of Korea (NRF- 2017R1E1A1A\\ 03070105 and NRF-2019R1A5A1028324). J. W. Jang was supported by the Korean IBS project IBS-R003-D1. In addition, J. W. Jang gratefully acknowledge the support of the Hausdorff Research Institute for Mathematics (Bonn), through the Junior Trimester Program on Kinetic Theory.



\bibliographystyle{amsplaindoi} 
\bibliography{bibliography}





\end{document}